\numberwithin{equation}{section}
\theoremstyle{plain}
\newtheorem{theorem}{Theorem}[section]
\newtheorem{lemma}[theorem]{Lemma}
\newtheorem{corollary}[theorem]{Corollary}
\newtheorem{conjecture}[theorem]{Conjecture}
\theoremstyle{definition}
\newtheorem{remark}[theorem]{Remark}
\newtheorem{?}[theorem]{Problem}
\def\boxit#1{\leavevmode\hbox{\vrule\vtop{\vbox{\kern.33333pt\hrule
    \kern1pt\hbox{\kern1pt\vbox{#1}\kern1pt}}\kern1pt\hrule}\vrule}}
\newcommand{\f}[1]{\ifthenelse{\equal{#1}{1}}{(q;q)_\infty}{(q^{#1};q^{#1})_{\infty}}}
\begin{document}
\title[Several $q$-series related to Ramanujan's theta functions]{Several $q$-series related to Ramanujan's theta functions}

\author[D. Tang]{Dazhao Tang}

\address[Dazhao Tang]{College of Mathematics and Statistics, Chongqing University, Huxi Campus LD204, Chongqing 401331, P.R. China}
\email{dazhaotang@sina.com}

\author[E. X. W. Xia]{Ernest. X. W. Xia}
\address[Ernest X. W. Xia]{Department of Mathematics, Jiangsu University, Zhenjiang, Jiangsu, 212013, P.R. China}
\email{Ernestxwxia@163.com}

\date{\today}

\begin{abstract}
Quite recently, the first author investigated vanishing coefficients of the arithmetic progressions in several $q$-series expansions. In this paper, we further study the signs of coefficients in two $q$-series expansions and establish some arithmetic relations for several $q$-series expansions by means of Ramanujan's theta functions. We obtain the 5-dissections of these two $q$-series and give combinatorial interpretations for these dissections. Moreover, we obtain four $q$-series identities involving the aforementioned $q$-series, two of which were proved by Kim and Toh via modular forms.
\end{abstract}

\subjclass[2010]{33D15, 11F33, 30B10}

\keywords{Ramanujan's theta functions; $q$-series expansions; Jacobi's triple product identity; arithmetic relations}

\maketitle

%%%%%%%%%%%%%%%%%%%
\section{Introduction}
Quite recently, Hirschhorn \cite{Hir2018} investigated vanishing coefficients of the arithmetic progressions in two $q$-series expansions. Motivated by the work of Hirschhorn, the first author \cite{Tang2018} investigated vanishing coefficients of the arithmetic progressions in following $q$-series expansions
\begin{align}
(-q,-q^{4};q^{5})_{\infty}^{2}(q^{4},q^{6};q^{10})_{\infty} &=\sum_{n=0}^{\infty}g_{1}(n)q^{n},\label{gf:g1}\\
(-q^{2},-q^{3};q^{5})_{\infty}^{2}(q^{2},q^{8};q^{10})_{\infty} &=\sum_{n=0}^{\infty}h_{1}(n)q^{n}.\label{gf:h1}
\end{align}
Here and in the sequel, we adopt the following standard $q$-series notation:
\begin{align*}
(a;q)_{\infty} &:=\prod_{n=0}^{\infty}(1-aq^{n}),\\
(a_{1},a_{2},\ldots,a_{m};q)_{\infty} &:=(a_{1};q)_{\infty}(a_{2};q)_{\infty}\cdots(a_{m};q)_{\infty},\quad\textrm{for}~~|q|<1.
\end{align*}

In \cite[Eqs. (1.3) and (1.4)]{Tang2018}, the first author proved that for $n\geq0$,
\begin{align}
g_{1}(5n+3) &=h_{1}(5n+1)=0.\label{iden-vanish}
\end{align}

Moreover, the first author conjectured the signs of coefficients in $q$-series \eqref{gf:h1} are periodic from some $n$. In this paper, we not only confirm this conjecture, but also establish 5-dissections of \eqref{gf:g1} and \eqref{gf:h1} along with combinatorial interpretations for these dissections.

Firstly, we obtain the following 5-dissections of \eqref{gf:g1} and \eqref{gf:h1}.
\begin{theorem}\label{beau THM-1}
We have
\begin{align*}
(-q,-q^{4};q^{5})_{\infty}^{2}(q^{4},q^{6};q^{10})_{\infty}=G_{0}(q^{5})+qG_{1}(q^{5})+q^{2}G_{2}(q^{5})+q^{4}G_{4}(q^{5}),
\end{align*}
where
\begin{align}
G_{0}(q) &=\sum_{n=0}^{\infty}g_{1}(5n)q^{n}=\dfrac{1}{(q,q^{4};q^{5})_{\infty}^{2}(q^{2},q^{8};q^{10})_{\infty}},\label{a1:5n}\\
G_{1}(q) &=\sum_{n=0}^{\infty}g_{1}(5n+1)q^{n}=\dfrac{2}{(q,q^{2},q^{3},q^{4};q^{5})_{\infty}(q^{2},q^{8};q^{10})_{\infty}},\label{a1:5n+1}\\
G_{2}(q) &=\sum_{n=0}^{\infty}g_{1}(5n+2)q^{n}=\dfrac{1}{(q,q^{4};q^{5})_{\infty}^{2}(q^{4},q^{6};q^{10})_{\infty}},\label{a1:5n+2}\\
G_{4}(q) &=\sum_{n=0}^{\infty}g_{1}(5n+4)q^{n}=\dfrac{1}{(q^{2},q^{3};q^{5})_{\infty}^{2}(q^{4},q^{6};q^{10})_{\infty}}.\label{a1:5n+4}
\end{align}
\end{theorem}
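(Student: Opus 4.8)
The plan is to first convert the left-hand side into a quotient of infinite products and recognize the four pieces $G_0,G_1,G_2,G_4$ as products of Rogers--Ramanujan functions. Applying the elementary factorization $(a;q)_\infty(-a;q)_\infty=(a^2;q^2)_\infty$ to each factor gives
\[
(-q,-q^4;q^5)_\infty^2(q^4,q^6;q^{10})_\infty=\frac{(q^2,q^8;q^{10})_\infty^2\,(q^4,q^6;q^{10})_\infty}{(q,q^4;q^5)_\infty^2}=:F(q).
\]
Writing $G(q)=1/(q,q^4;q^5)_\infty$ and $H(q)=1/(q^2,q^3;q^5)_\infty$ for the Rogers--Ramanujan functions, and noting $(q^2,q^8;q^{10})_\infty=1/G(q^2)$, $(q^4,q^6;q^{10})_\infty=1/H(q^2)$, the claimed identities \eqref{a1:5n}--\eqref{a1:5n+4} read $G_0=G(q)^2G(q^2)$, $G_1=2G(q)H(q)G(q^2)$, $G_2=G(q)^2H(q^2)$ and $G_4=H(q)^2H(q^2)$. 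Thus the theorem is equivalent to the single Rogers--Ramanujan--type identity
\[
\frac{G(q)^2}{G(q^2)^2H(q^2)}=G(q^5)^2G(q^{10})+2q\,G(q^5)H(q^5)G(q^{10})+q^2G(q^5)^2H(q^{10})+q^4H(q^5)^2H(q^{10}),
\]
in which the absence of a $q^3$-term is exactly the vanishing \eqref{iden-vanish}.

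To prove this identity I would extract arithmetic progressions with fifth roots of unity. Let $\zeta=e^{2\pi i/5}$ and recall
\[
q^rG_r(q^5)=\sum_{n\equiv r\ (5)}g_1(n)q^n=\frac15\sum_{j=0}^{4}\zeta^{-jr}F(\zeta^j q),\qquad r\in\{0,1,2,3,4\}.
\]
The crucial structural feature is that every factor of $F$ has a base divisible by $5$, so the substitution $q\mapsto\zeta^jq$ fixes all bases and merely rotates the arguments: $(q^a;q^5)_\infty\mapsto(\zeta^{ja}q^a;q^5)_\infty$ and $(q^a;q^{10})_\infty\mapsto(\zeta^{ja}q^a;q^{10})_\infty$. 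Hence each $F(\zeta^j q)$ is again an explicit infinite product in $q$, and the five of them are Galois-conjugate over $\numset{Q}(q)$; the prototype of the simplifications that arise is the evaluation $\prod_{j=0}^4(\zeta^{ja}q^a;q^5)_\infty=(q^{5a};q^{25})_\infty$ valid when $5\nmid a$.

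The hard part is to show that the root-of-unity sums collapse to the single products on the right, rather than to sums of several theta functions. I would pair the terms $j$ and $5-j$ so that conjugate contributions combine into genuine infinite products, and then reduce the resulting expressions by Jacobi's triple product identity. The delicate points are precisely the three features that make the statement so clean: the complete cancellation of the $r=3$ class (which reproduces \eqref{iden-vanish}), the coalescence of two equal conjugate contributions into the factor $2$ appearing in \eqref{a1:5n+1}, and the collapse of each surviving class to a product with no residual theta sum. Equivalently, one may bypass roots of unity and instead $5$-dissect the theta representation $F(q)=f(q,q^4)^2f(-q^4,-q^6)/\big((q^5;q^5)_\infty^2(q^{10};q^{10})_\infty\big)$ by splitting each summation index modulo $5$ and recombining; either route reduces to the same bookkeeping, which I expect to be the main obstacle, while the concluding passage back to the product forms \eqref{a1:5n}--\eqref{a1:5n+4} is routine.
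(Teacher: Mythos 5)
Your opening reduction is correct and clean: rewriting the left-hand side as $G(q)^2/\bigl(G(q^2)^2H(q^2)\bigr)$ and the four claimed products as $G(q)^2G(q^2)$, $2G(q)H(q)G(q^2)$, $G(q)^2H(q^2)$, $H(q)^2H(q^2)$ is a faithful restatement of the theorem, and the root-of-unity extraction formula is stated correctly. But from that point on you have only a plan, not a proof: the entire content of the theorem is precisely the step you defer as ``the main obstacle,'' namely that each residue class of the $5$-dissection collapses to a \emph{single} infinite product. Generically a $5$-dissection of a theta quotient of this kind yields a sum of several theta quotients in each class; the collapse here is a genuine identity that must be proved, and neither the $j\leftrightarrow 5-j$ pairing nor your ``prototype'' evaluation $\prod_{j=0}^4(\zeta^{ja}q^a;q^5)_\infty=(q^{5a};q^{25})_\infty$ (which computes a product over $j$, not the sums $\sum_j\zeta^{-jr}F(\zeta^jq)$ you actually need) delivers it. Your remark that the concluding passage back to the product forms is routine has it backwards: that passage is where all the work lies.

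For comparison, the paper carries out exactly the alternative route you mention in passing (splitting the summation indices of the theta representation of $F$ modulo $5$), and the collapse for the $5n$ class rests on the nontrivial identity $\varphi(q)M_{1}(q)+2\psi(q^{2})N_{1}(q)=\varphi(q^{5})M_{1}(q)+2q\psi(q^{10})N_{1}(q)$, which in turn requires four applications of the addition formula \eqref{ff} to establish $M_{1}(q)=f(-q^{8},-q^{12})f(-q^{10},-q^{10})$ and $N_{1}(q)=-qf(q,q^{9})f(-q^{4},-q^{6})$, together with the dissection identities \eqref{phi-psi-1} and \eqref{phi-psi-2}; the $5n+1$ class additionally needs $\psi(q^{2})\varphi(q^{5})-q\varphi(q)\psi(q^{10})=(q;q)_{\infty}(q^{5};q^{5})_{\infty}$. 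No analogue of any of these ingredients appears in your proposal, so as it stands the argument has a gap exactly at the theorem's core.
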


\begin{theorem}\label{beau THM-2}
We have
\begin{align*}
(-q^{2},-q^{3};q^{5})_{\infty}^{2}(q^{2},q^{8};q^{10})_{\infty}=H_{0}(q^{5})+q^{2}H_{2}(q^{5})+q^{3}H_{3}(q^{5})+q^{4}H_{4}(q^{5}),
\end{align*}
where
\begin{align}
H_{0}(q) &=\sum_{n=0}^{\infty}h_{1}(5n)q^{n}=\dfrac{1}{(q,q^{4};q^{5})_{\infty}^{2}(q^{2},q^{8};q^{10})_{\infty}},\label{b1:5n}\\
H_{2}(q) &=\sum_{n=0}^{\infty}h_{1}(5n+2)q^{n}=\dfrac{1}{(q^{2},q^{3};q^{5})_{\infty}^{2}(q^{2},q^{8};q^{10})_{\infty}},\label{b1:5n+1}\\
H_{3}(q) &=\sum_{n=0}^{\infty}h_{1}(5n+3)q^{n}=\dfrac{2}{(q,q^{2},q^{3},q^{4};q^{5})_{\infty}(q^{4},q^{6};q^{10})_{\infty}},\label{b1:5n+2}\\
H_{4}(q) &=\sum_{n=0}^{\infty}h_{1}(5n+4)q^{n}=\dfrac{-1}{(q^{2},q^{3};q^{5})_{\infty}^{2}(q^{4},q^{6};q^{10})_{\infty}}.\label{b1:5n+4}
\end{align}
\end{theorem}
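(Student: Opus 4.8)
The plan is to regard the left-hand side as a quotient of Ramanujan theta functions, to 5-dissect the numerator, and then to identify the four surviving residue classes with the reciprocal products on the right. First I would pass to theta-function form by Jacobi's triple product identity. With $f(a,b)=\sum_{n=-\infty}^{\infty}a^{n(n+1)/2}b^{n(n-1)/2}=(-a;ab)_{\infty}(-b;ab)_{\infty}(ab;ab)_{\infty}$ one has $(-q^{2},-q^{3};q^{5})_{\infty}=f(q^{2},q^{3})/(q^{5};q^{5})_{\infty}$ and $(q^{2},q^{8};q^{10})_{\infty}=f(-q^{2},-q^{8})/(q^{10};q^{10})_{\infty}$, so that
\[
(-q^{2},-q^{3};q^{5})_{\infty}^{2}(q^{2},q^{8};q^{10})_{\infty}=\frac{f(q^{2},q^{3})^{2}\,f(-q^{2},-q^{8})}{(q^{5};q^{5})_{\infty}^{2}\,(q^{10};q^{10})_{\infty}}.
\]
The key simplification is that the entire denominator is a power series in $q^{5}$, so it is inert under the 5-dissection and factors straight through; only the numerator $N(q):=f(q^{2},q^{3})^{2}f(-q^{2},-q^{8})$ has to be dissected. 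Moreover each factor of $N$ dissects transparently: since $f(q^{2},q^{3})=\sum_{n}q^{(5n^{2}-n)/2}$ and $f(-q^{2},-q^{8})=\sum_{n}(-1)^{n}q^{5n^{2}-3n}$, splitting the summation index modulo $5$ sends each residue class of $n$ to a single residue class of the exponent, so each factor splits into five theta functions, each carried by one residue class modulo $5$ and offset by a power of $q$.

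Next I would convolve these dissections, collect the terms of $N(q)$ whose exponent lies in a fixed residue class $r$ modulo $5$, and write $N(q)=\sum_{r}q^{r}\widetilde{N}_{r}(q^{5})$. Dividing by the $q^{5}$-denominator and comparing with $\sum_{r}q^{r}H_{r}(q^{5})$ then yields, after the replacement $q^{5}\mapsto q$,
\[
H_{r}(q)=\frac{\widetilde{N}_{r}(q)}{(q;q)_{\infty}^{2}(q^{2};q^{2})_{\infty}},\qquad r\in\{0,2,3,4\}.
\]
The class $r=1$ must collapse entirely, i.e. $\widetilde{N}_{1}\equiv0$; this is precisely the vanishing $h_{1}(5n+1)=0$ of \eqref{iden-vanish}, and it both explains the absence of a $qH_{1}(q^{5})$ term and serves as a sharp consistency check on the convolution. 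It then remains to recognize each $\widetilde{N}_{r}$ as an explicit theta product and to simplify the quotient above, via Jacobi's triple product identity used in reverse together with splittings such as $(q;q)_{\infty}=(q,q^{2},q^{3},q^{4},q^{5};q^{5})_{\infty}$, into the reciprocal products of \eqref{b1:5n}--\eqref{b1:5n+4}.

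The main obstacle is exactly this last identification: turning the convolved theta sums $\widetilde{N}_{r}$ into the clean Rogers--Ramanujan-type reciprocal products, and pinning down the normalizing constants --- above all the factor $2$ in $H_{3}$ and the overall sign $-1$ in $H_{4}$, both of which emerge from how the dissection pieces of the square $f(q^{2},q^{3})^{2}$ overlap and reinforce. To keep this under control I would match initial coefficients of both sides to low order as a numerical guard, and I would exploit the two cross-checks visible against Theorem \ref{beau THM-1}, namely that the target formulas force $H_{0}=G_{0}$ and $H_{4}=-G_{4}$, comparing with \eqref{a1:5n} and \eqref{a1:5n+4} to confirm the $r=0$ and $r=4$ pieces before committing to the closed forms for $H_{2}$ and $H_{3}$.
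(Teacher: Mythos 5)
Your overall strategy---pass to the theta quotient $f(q^{2},q^{3})^{2}f(-q^{2},-q^{8})/\bigl((q^{5};q^{5})_{\infty}^{2}(q^{10};q^{10})_{\infty}\bigr)$, observe that the denominator is a series in $q^{5}$ and hence inert, and dissect only the numerator---is exactly the skeleton of the paper's argument, and your preliminary reductions (the JTP conversions, the exponent computations $(5n^{2}-n)/2$ and $5n^{2}-3n$, and the observation that each residue class of the summation index lands in a single residue class of the exponent) are all correct. The vanishing of the class $r=1$ as a consistency check with \eqref{iden-vanish} is also sound.

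The genuine gap is in the step you yourself flag as ``the main obstacle'': recognizing each convolved piece $\widetilde{N}_{r}$ as a single theta product. This is not a bookkeeping afterthought but the entire content of the theorem, and the tool you propose for it---Jacobi's triple product ``used in reverse'' plus splittings of $(q;q)_{\infty}$---cannot accomplish it. A naive three-fold convolution of the 5-dissections of $f(q^{2},q^{3})$, $f(q^{2},q^{3})$ and $f(-q^{2},-q^{8})$ leaves each $\widetilde{N}_{r}$ as a sum of up to $25$ products of theta functions, and collapsing such a sum to one infinite product requires two-variable \emph{addition} formulas, not the triple product identity. The paper's proof supplies exactly this missing machinery: (i) the square $f(q^{2},q^{3})^{2}$ is first tamed by the substitution $(m,n)\mapsto(r+s,r-s)$ as in \eqref{iden-1}, which rewrites the double sum as $\varphi(q^{5})(\cdots)-2q^{2}\psi(q^{10})(\cdots)$ and cuts the convolution down to products of two theta series; (ii) rather than ``recognizing'' the dissected pieces directly, the claimed right-hand sides are themselves expanded into the same canonical shape (as in \eqref{iden:A0}); and (iii) the equality of the two resulting combinations is proved by repeated use of the product formula \eqref{ff} together with the specific identities \eqref{phi-psi-1}, \eqref{phi-psi-2} and $\psi(q^{2})\varphi(q^{5})-q\varphi(q)\psi(q^{10})=(q;q)_{\infty}(q^{5};q^{5})_{\infty}$ (Eq.\ \eqref{phi-psi}), packaged in the key cancellation lemma \eqref{iden:M-N}. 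Matching low-order coefficients and cross-checking $H_{0}=G_{0}$, $H_{4}=-G_{4}$ against Theorem \ref{beau THM-1} are useful guards but do not substitute for these identities; without \eqref{ff} and \eqref{phi-psi-1}--\eqref{phi-psi-2} the proposal does not close.
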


Therefore we get the following combinatorial interpretations.

$g_{1}(5n)$ is the number of partitions of $n$ into parts which are $\pm1$, $\pm2$, $\pm4\pmod{10}$, where parts $\pm1$ and $\pm4$ appear in two flavours,

$g_{1}(5n+1)$ is the twice of number of partitions of $n$ into parts which are $\pm1$, $\pm2$, $\pm3$, $\pm4\pmod{10}$, where parts $\pm2$ appear in two flavours,

$g_{1}(5n+2)$ is the number of partitions of $n$ into parts which are $\pm1$, $\pm4\pmod{10}$, where parts $\pm1$ parts appear in two flavours and $\pm4$ appear in three flavours,

$g_{1}(5n+4)$ is the number of partitions of $n$ into parts which are $\pm2$, $\pm3$, $\pm4\pmod{10}$, where parts $\pm2$ and $\pm3$ appear in two flavours,

$h_{1}(5n)$ is the number of partitions of $n$ into parts which are $\pm1$, $\pm2$, $\pm4\pmod{10}$, where parts $\pm1$ and $\pm4$ appear in two flavours,

$h_{1}(5n+2)$ is the number of partitions of $n$ into parts which are $\pm2$, $\pm3\pmod{10}$, where parts $\pm3$ parts appear in two flavours and $\pm2$ appear in three flavours,

$h_{1}(5n+3)$ is the twice of number of partitions of $n$ into parts which are $\pm1$, $\pm2$, $\pm3$, $\pm4\pmod{10}$, where parts $\pm4$ appear in two flavours,

$-h_{1}(5n+4)$ is the number of partitions of $n$ into parts which are $\pm2$, $\pm3$, $\pm4\pmod{10}$, where parts $\pm2$ and $\pm3$ appear in two flavours.

By these combinatorial interpretations, we obtain immediately the following corollaries.
\begin{corollary}
For any integer $n\geq0$,
\begin{align*}
g_{1}(5n) &>0,\\
g_{1}(5n+1) &>0,\\
g_{1}(5n+2) &>0,\\
g_{1}(5n+4) &>0 \quad(n\neq1).
\end{align*}
\end{corollary}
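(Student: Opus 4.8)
The plan is to read the positivity off directly from the product formulas \eqref{a1:5n}--\eqref{a1:5n+4} of Theorem \ref{beau THM-1}, which express each of $G_0,G_1,G_2,G_4$ as a positive constant (either $1$ or $2$) divided by a product of factors of the form $(q^a;q^b)_\infty$. Since
\[
\frac{1}{(q^a;q^b)_\infty}=\prod_{k\ge 0}\frac{1}{1-q^{a+bk}}=\prod_{k\ge 0}\bigl(1+q^{a+bk}+q^{2(a+bk)}+\cdots\bigr)
\]
expands with nonnegative coefficients, and products of power series with nonnegative coefficients again have nonnegative coefficients, every coefficient $g_1(5n+j)$ with $j\in\{0,1,2,4\}$ is nonnegative. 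This is precisely the content of the combinatorial interpretations stated above: each coefficient counts a positive multiple of the partitions of $n$ into the indicated parts.

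Next I would upgrade nonnegativity to strict positivity by exhibiting, for each relevant $n$, at least one admissible partition. For $G_0,G_1,G_2$ the denominators all contain the factor $(q;q^5)_\infty$, so a part of size $1$ is always available; hence the all-ones partition $1+1+\cdots+1$ of $n$ is admissible for every $n\ge 0$ (with the empty partition covering $n=0$). This forces $g_1(5n)>0$, $g_1(5n+1)>0$ and $g_1(5n+2)>0$ for all $n\ge 0$, the extra factor of $2$ in $G_1$ only reinforcing the conclusion.

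The one place that requires care — and the source of the stated exception $n\neq 1$ — is $g_1(5n+4)$. Its denominator $(q^2,q^3;q^5)_\infty^2(q^4,q^6;q^{10})_\infty$ offers no part of size $1$; the smallest available parts are $2$ and $3$. Thus the coefficient of $q^n$ in $G_4$ is positive exactly when $n$ lies in the numerical semigroup generated by $2$ and $3$, which is every nonnegative integer except $1$. Concretely, each $n\ge 2$ is a sum of $2$'s and $3$'s and $n=0$ is the empty partition, whereas $1$ cannot be written as a sum of parts each of size at least $2$, so $g_1(9)=0$, in agreement with the excluded value. The only genuine subtlety in the whole argument is this semigroup observation isolating the single exceptional $n$; everything else is immediate from the nonnegativity of the geometric-series expansions.
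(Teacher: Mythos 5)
Your argument is correct and is essentially the paper's own: the paper deduces the corollary ``immediately'' from the combinatorial interpretations of the 5-dissection in Theorem \ref{beau THM-1}, which is exactly your reading of positivity off the product formulas \eqref{a1:5n}--\eqref{a1:5n+4}. You merely make explicit the two details the paper leaves implicit, namely the witness partition $1+1+\cdots+1$ for $G_0,G_1,G_2$ and the semigroup generated by $2$ and $3$ that isolates the single exception $n=1$ for $G_4$.
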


\begin{corollary}
For any integer $n\geq0$,
\begin{align*}
h_{1}(5n) &>0,\\
h_{1}(5n+2) &>0 \quad(n\neq1),\\
h_{1}(5n+3) &>0,\\
h_{1}(5n+4) &<0 \quad(n\neq1).
\end{align*}
\end{corollary}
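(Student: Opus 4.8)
The plan is to derive all four sign statements directly from the product formulas of Theorem~\ref{beau THM-2} (equivalently, from the combinatorial interpretations that immediately precede the corollary), reducing each inequality to the purely elementary question of which nonnegative integers $n$ are representable as sums of the relevant part sizes.

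First I would observe that each of the series $H_0(q)$, $H_2(q)$, $\tfrac12 H_3(q)$, and $-H_4(q)$ is, by Theorem~\ref{beau THM-2}, the reciprocal of a product of factors $(1-q^{a})$ with $a\geq1$. Expanding every factor as a geometric series $1/(1-q^{a})=\sum_{k\geq0}q^{ak}$ shows that such a reciprocal has nonnegative integer coefficients; indeed $[q^{n}]$ counts the partitions of $n$ into the prescribed (flavoured) parts, and the flavours only inflate the count. Hence at once $h_{1}(5n)\geq0$, $h_{1}(5n+2)\geq0$, $h_{1}(5n+3)=2\cdot(\text{nonneg})\geq0$, and $-h_{1}(5n+4)\geq0$. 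This already settles every sign; it remains only to decide strict positivity.

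For strict positivity I would use that $[q^{n}]$ of a reciprocal $\prod_{a\in S}1/(1-q^{a})$ is positive exactly when $n$ lies in the numerical semigroup generated by the available part sizes $S$. For $H_{0}$ and $H_{3}$ the set $S$ contains the part $1$ (from the factors $(q,q^{4};q^{5})_{\infty}^{2}$ and $(q,q^{2},q^{3},q^{4};q^{5})_{\infty}$ respectively), so every $n\geq0$ is representable; thus $h_{1}(5n)>0$ and $h_{1}(5n+3)=2\cdot(\text{positive})>0$ for all $n\geq0$. For $H_{2}$ and $-H_{4}$ the smallest available parts are $2$ and $3$ (from $(q^{2},q^{3};q^{5})_{\infty}^{2}$), and $\langle 2,3\rangle=\{0,2,3,4,5,\dots\}$ is all of $\numset{Z}_{\geq0}$ except $1$ (its Frobenius number is $2\cdot3-2-3=1$). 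Hence $h_{1}(5n+2)>0$ and $-h_{1}(5n+4)>0$ for every $n\neq1$, while both coefficients vanish at $n=1$, which is precisely the exceptional case recorded in the statement.

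There is essentially no obstacle here: the whole argument is a direct consequence of the product formulas of Theorem~\ref{beau THM-2} together with the nonnegativity of coefficients of reciprocals of $(\,\cdot\,;q)_{\infty}$-type products. The only point needing a moment's care is the $n=1$ exception, and that comes solely from the elementary fact that $1$ is not a nonnegative combination of $2$ and $3$; I would simply note this and check directly that $h_{1}(7)=h_{1}(9)=0$.
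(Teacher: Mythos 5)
Your proposal is correct and follows essentially the same route as the paper: the authors also derive this corollary directly from the combinatorial interpretations of the product formulas in Theorem~\ref{beau THM-2} (partitions into flavoured parts), with the $n=1$ exceptions for $h_{1}(5n+2)$ and $h_{1}(5n+4)$ arising exactly as you say, because the smallest available parts there are $2$ and $3$.
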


\begin{corollary}
For any integer $n\geq0$,
\begin{align}
g_{1}(5n) &=h_{1}(5n),\label{a1b1-1}\\
g_{1}(5n+4) &=-h_{1}(5n+4).\label{a1b1-2}
\end{align}
\end{corollary}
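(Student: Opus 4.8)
The plan is to derive both identities by directly comparing the explicit product formulas furnished by Theorems \ref{beau THM-1} and \ref{beau THM-2}; no genuinely new computation is required, since all of the analytic work is already carried out in establishing those two 5-dissections. Concretely, I would set side by side the generating functions for the relevant residue classes modulo $5$ and observe that the corresponding infinite products either coincide or differ only by an overall sign.

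For the first identity, I would recall from \eqref{a1:5n} and \eqref{b1:5n} that
\begin{align*}
G_{0}(q)=\dfrac{1}{(q,q^{4};q^{5})_{\infty}^{2}(q^{2},q^{8};q^{10})_{\infty}}=H_{0}(q).
\end{align*}
Since $G_{0}(q)=\sum_{n\ge0}g_{1}(5n)q^{n}$ and $H_{0}(q)=\sum_{n\ge0}h_{1}(5n)q^{n}$ are power series in $q$ with the same closed form, comparing the coefficient of $q^{n}$ on both sides yields \eqref{a1b1-1}. For the second identity, I would compare \eqref{a1:5n+4} with \eqref{b1:5n+4}, which give
\begin{align*}
G_{4}(q)=\dfrac{1}{(q^{2},q^{3};q^{5})_{\infty}^{2}(q^{4},q^{6};q^{10})_{\infty}}=-H_{4}(q),
\end{align*}
so that $\sum_{n\ge0}g_{1}(5n+4)q^{n}=-\sum_{n\ge0}h_{1}(5n+4)q^{n}$, and equating coefficients of $q^{n}$ produces \eqref{a1b1-2}.

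Because the two product representations are manifestly identical (respectively, negatives of one another), there is essentially no obstacle at this stage: the difficulty has been completely absorbed into the proofs of the two dissection theorems, where the Ramanujan theta-function manipulations and the identification of the four surviving components take place. The only point worth stating carefully is that equality of two formal power series is equivalent to equality of all their coefficients, which legitimizes reading off the arithmetic relations term by term. As an alternative, entirely combinatorial route, one could instead invoke the partition interpretations recorded just after Theorem \ref{beau THM-2}: the quantities $g_{1}(5n)$ and $h_{1}(5n)$ are described as counting partitions of $n$ into parts $\pm1,\pm2,\pm4\pmod{10}$ with $\pm1$ and $\pm4$ in two flavours — the very same object — while $g_{1}(5n+4)$ and $-h_{1}(5n+4)$ both count partitions of $n$ into parts $\pm2,\pm3,\pm4\pmod{10}$ with $\pm2$ and $\pm3$ in two flavours, giving the two identities bijectively without any further work.
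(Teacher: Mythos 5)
Your proposal is correct and follows essentially the same route as the paper, which deduces the corollary immediately from the coincidence of the product representations \eqref{a1:5n} with \eqref{b1:5n} and of \eqref{a1:5n+4} with the negative of \eqref{b1:5n+4} (equivalently, from the matching combinatorial interpretations). Nothing further is needed.
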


Moreover, the first author studied vanishing coefficients in following two general $q$-series expansions:
\begin{align}
(-q^{r},-q^{t-r};q^{t})_{\infty}^{3}(q^{s},q^{2t-s};q^{2t})_{\infty} &:=\sum_{n=0}^{\infty}g_{r,s,t}(n)q^{n},\label{g,r-s-t}\\
(-q^{r},-q^{t-r};q^{t})_{\infty}(q^{s},q^{2t-s};q^{2t})_{\infty}^{3} &:=\sum_{n=0}^{\infty}h_{r,s,t}(n)q^{n}\label{h,r-s-t}
\end{align}
where $t\geq5$ is a prime, $r ,s$ are positive integers and $r<t$, $s\neq t$.

Interestingly, we obtain the following identities of $q$-series expansions \eqref{g,r-s-t} and \eqref{h,r-s-t} for $t=5$, which parallel to \eqref{a1b1-1} and \eqref{a1b1-2}.
\begin{theorem}\label{THM:relation}
For any integer $n\geq0$,
\begin{align}
g_{1,2,5}(5n+1) &=g_{2,4,5}(5n+2),\label{a2b2-relation1}\\
g_{1,2,5}(5n+3) &=-g_{2,4,5}(5n+4),\label{a2b2-relation2}\\
g_{1,3,5}(5n) &=g_{2,1,5}(5n),\label{a3b3-relation1}\\
g_{1,3,5}(5n+2) &=g_{2,1,5}(5n+2).\label{a3b3-relation2}\\
h_{1,1,5}(5n) &=h_{2,3,5}(5n+2),\label{analog-relat-1}\\
h_{1,1,5}(5n+1) &=h_{2,3,5}(5n+3),\label{analog-relati-2}\\
h_{1,4,5}(5n+1) &=h_{2,2,5}(5n),\\
h_{1,4,5}(5n+2) &=-h_{2,2,5}(5n+1).\label{final-relation1}
\end{align}
\end{theorem}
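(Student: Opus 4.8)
The plan is to prove \eqref{a2b2-relation1}--\eqref{final-relation1} by the same mechanism that produces \eqref{a1b1-1} and \eqref{a1b1-2} out of Theorems \ref{beau THM-1} and \ref{beau THM-2}: compute the complete $5$-dissection of each of the eight generating functions appearing in \eqref{g,r-s-t} and \eqref{h,r-s-t} with $t=5$, and then observe that the two arithmetic-progression components paired in each identity coincide, as infinite products, up to the indicated sign. First I would pass from $q$-products to Ramanujan's theta functions by Jacobi's triple product identity, writing, with $f(a,b):=\sum_{n=-\infty}^{\infty}a^{n(n+1)/2}b^{n(n-1)/2}$,
\begin{align*}
(-q^{r},-q^{5-r};q^{5})_{\infty}=\dfrac{f(q^{r},q^{5-r})}{(q^{5};q^{5})_{\infty}},\qquad
(q^{s},q^{10-s};q^{10})_{\infty}=\dfrac{f(-q^{s},-q^{10-s})}{(q^{10};q^{10})_{\infty}},
\end{align*}
so that only the three theta series $f(q,q^{4})$, $f(q^{2},q^{3})$ and $f(-q^{s},-q^{10-s})$ enter.

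The $5$-dissection of each theta series is governed by the residue of its exponent modulo $5$: for $f(q^{r},q^{5-r})$ the exponent $[5n^{2}+(2r-5)n]/2$ is $\equiv rn\pmod 5$, while for $f(-q^{s},-q^{10-s})$ the exponent $5n^{2}+(s-5)n$ is $\equiv sn\pmod 5$. Splitting each defining sum according to $n\bmod 5$ and writing $n=5m+j$ therefore expresses every theta series as $\sum_{j}q^{c_{j}}\,\theta_{j}(q^{5})$ with each $\theta_{j}$ again a theta series in $q^{5}$, which Jacobi's triple product re-collapses into an infinite product. For the cubed factor $(-q^{r},-q^{5-r};q^{5})_{\infty}^{3}$ (respectively $(q^{s},q^{10-s};q^{10})_{\infty}^{3}$) the dissection is obtained by convolving three copies of the dissected theta series and keeping only those index triples $(n_{1},n_{2},n_{3})$ for which $r(n_{1}+n_{2}+n_{3})$ (respectively $s(n_{1}+n_{2}+n_{3})$) lies in the prescribed class modulo $5$; multiplying by the dissection of the remaining single factor and reading off powers of $q$ then isolates each progression $5n+i$.

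It is convenient to record the reductions $(-q,-q^{4};q^{5})_{\infty}=(q^{2},q^{8};q^{10})_{\infty}/(q,q^{4};q^{5})_{\infty}$ and $(-q^{2},-q^{3};q^{5})_{\infty}=(q^{4},q^{6};q^{10})_{\infty}/(q^{2},q^{3};q^{5})_{\infty}$ (and their analogues), which collapse each series to a quotient of Euler products and make the final product comparisons transparent. Each pair in Theorem \ref{THM:relation} is related by the parameter change $r\mapsto 2r\pmod 5$, together with $s\mapsto 2s\pmod 5$ for the $g$-family and $s\mapsto 3s\pmod 5$ for the $h$-family; this is precisely what makes a component of one series agree with a component of its partner, the accompanying shift of residue class (and the minus signs in \eqref{a2b2-relation2} and \eqref{final-relation1}) being dictated by the offsets $c_{j}$ that emerge from the dissection.

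The main obstacle will be the $5$-dissection of the cubed theta factors. The triple convolution produces sums of products of three theta series, and the delicate step is to collapse each such sum, within a fixed residue class, back into a single infinite product; this is where the signs originate. I expect these collapses to follow from the standard Ramanujan theta-function addition and quintuple-product identities already underlying Theorems \ref{beau THM-1} and \ref{beau THM-2}, after which the eight claimed identities reduce to eight explicit equalities of infinite products, each verified by inspection.
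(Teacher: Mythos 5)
Your overall framework---pass to theta functions via the Jacobi triple product, $5$-dissect each theta series by splitting the summation index modulo $5$, and compare the paired arithmetic-progression components---is the right starting point and matches the paper's setup. But the step you yourself flag as ``the main obstacle'' is where the proposal breaks down, and it is not a technical detail to be deferred: you propose to collapse each dissected component of the \emph{cubed} series $(-q^{r},-q^{5-r};q^{5})_{\infty}^{3}(q^{s},q^{10-s};q^{10})_{\infty}$ back into a single infinite product and then verify eight product identities ``by inspection.'' There is no reason to expect such product forms to exist here. For the \emph{squared} series the paper does obtain single-product components (Theorems \ref{beau THM-1} and \ref{beau THM-2}), but for the cubed series of Theorem \ref{THM:relation} the paper conspicuously gives no closed product form for any individual component $\sum g_{r,s,5}(5n+i)q^{n}$ --- only equalities between pairs of components --- and its proof never needs one. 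Asserting that the triple convolution collapses via ``standard addition and quintuple-product identities'' is an unfounded expectation, not an argument; if it were carried out it would be the entire content of the proof, and it most likely cannot be carried out at all.

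The mechanism that actually works avoids product forms entirely. After dissecting, both $\sum g_{1,2,5}(n)q^{n}$ and $\sum g_{2,4,5}(n)q^{n}$ become sums of six terms, each an explicit eta-quotient prefactor times a bilinear expression $S_{i}$ (resp.\ $T_{i}$) of the form $f(q^{a},q^{5-a})\sum_{n}q^{40n^{2}+cn}$; the prefactors already agree between the two series (after the relabelling $q\mapsto q$ of the residue classes), so the theorem reduces to $H_{5,1}(S_{i})=H_{5,2}(T_{i})$ for $1\le i\le 6$. Each of these is then proved \emph{termwise}: one further dissects $f(q,q^{4})$ and $f(q^{2},q^{3})$ into four pieces $\sum_{m}q^{40m^{2}+am}$, writes $S_{i}$ and $T_{i}$ as signed sums of double theta sums $\sum_{m,n}q^{40m^{2}+am+40n^{2}+bn}$, and matches these in pairs by an explicit linear change of variables (of the type $m=2r-s$, $n=r+2s$) restricted to the relevant congruence class, exactly as in the derivation of \eqref{S1:5}. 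The signs and the shift of residue class fall out of this bijection of lattice points rather than from any product comparison. Your proposal is missing this termwise-matching idea, and without it (or a genuinely new collapse of the components to products) the argument does not close.
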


Finally, we define the following two $q$-series expansion
\begin{align}
(q,q^{4};q^{5})_{\infty}^{2}(q^{4},q^{6};q^{10})_{\infty} &=\sum_{n=0}^{\infty}g_{2}(n)q^{n},\label{gf:g2}\\
(q^{2},q^{3};q^{5})_{\infty}^{2}(q^{2},q^{8};q^{10})_{\infty} &=\sum_{n=0}^{\infty}h_{2}(n)q^{n}.\label{gf:h2}
\end{align}

We also obtain several $q$-series identities involving \eqref{gf:g1}, \eqref{gf:h1}, \eqref{h,r-s-t}, \eqref{gf:g2}, and \eqref{gf:h2}.
\begin{theorem}\label{THM:relation-2}
We have
\begin{align}
(-q,-q^{4};q^{5})_{\infty}^{2}(q^{4},q^{6};q^{10})_{\infty} &+(-q^{2},-q^{3};q^{5})_{\infty}^{2}(q^{2},q^{8};q^{10})_{\infty}\notag\\
 &\quad=\dfrac{2(q^{10};q^{10})_{\infty}^{3}}{(q^{2};q^{2})_{\infty}(q^{5};q^{5})_{\infty}^{2}}(-q,-q^{4};q^{5})_{\infty}(q^{4},q^{6};q^{10})_{\infty}^{3},
 \label{q-iden-1}\\
(q,q^{4};q^{5})_{\infty}^{2}(q^{4},q^{6};q^{10})_{\infty} &+(q^{2},q^{3};q^{5})_{\infty}^{2}(q^{2},q^{8};q^{10})_{\infty}\notag\\
 &\quad=\dfrac{2(q;q)_{\infty}^{2}(q^{10};q^{10})_{\infty}^{4}}{(q^{2};q^{2})_{\infty}^{2}
 (q^{5};q^{5})_{\infty}^{4}}(-q,-q^{4};q^{5})_{\infty}(q^{4},q^{6};q^{10})_{\infty}^{3},\label{q-iden-2}\\
(-q,-q^{4};q^{5})_{\infty}(q^{4},q^{6};q^{10})_{\infty}^{3} &-q(-q^{2},-q^{3};q^{5})_{\infty}(q^{2},q^{8};q^{10})_{\infty}^{3}\notag\\
 &\quad=\dfrac{(q^{2};q^{2})_{\infty}(q^{5};q^{5})_{\infty}^{2}}{(q^{10};q^{10})_{\infty}^{3}}(-q^{2},-q^{3};q^{5})_{\infty}^{2}(q^{2},q^{8};q^{10})_{\infty},
 \label{q-iden-3}\\
(-q,-q^{4};q^{5})_{\infty}(q^{4},q^{6};q^{10})_{\infty}^{3} &+q(-q^{2},-q^{3};q^{5})_{\infty}(q^{2},q^{8};q^{10})_{\infty}^{3}\notag\\
 &\quad=\dfrac{(q^{2};q^{2})_{\infty}^{2}(q^{5};q^{5})_{\infty}^{4}}
 {(q;q)_{\infty}^{2}(q^{10};q^{10})_{\infty}^{4}}(q^{2},q^{3};q^{5})_{\infty}^{2}(q^{2},q^{8};q^{10})_{\infty}.\label{q-iden-4}
\end{align}
\end{theorem}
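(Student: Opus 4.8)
The plan is to first reduce the four identities to their essential content via elementary product evaluations, and then to prove the two genuinely independent identities that remain by passing to Ramanujan's theta functions through Jacobi's triple product. Throughout I abbreviate
\[
a=(-q,-q^4;q^5)_\infty,\quad b=(-q^2,-q^3;q^5)_\infty,\quad c=(q^4,q^6;q^{10})_\infty,\quad d=(q^2,q^8;q^{10})_\infty,
\]
so that the product in \eqref{gf:g1} is $a^2c$, that in \eqref{gf:h1} is $b^2d$, and the cubic products on the right of \eqref{q-iden-3}--\eqref{q-iden-4} are $ac^3$ and $bd^3$. From $(-x;q)_\infty(x;q)_\infty=(x^2;q^2)_\infty$ and telescoping I would first record the elementary evaluations
\[
ab=\frac{(q^2;q^2)_\infty(q^5;q^5)_\infty}{(q;q)_\infty(q^{10};q^{10})_\infty},\qquad cd=\frac{(q^2;q^2)_\infty}{(q^{10};q^{10})_\infty},\qquad (q,q^4;q^5)_\infty=\frac{d}{a},\qquad (q^2,q^3;q^5)_\infty=\frac{c}{b}.
\]

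Using these, I would show that only two of the four identities are independent. Replacing $1/a^2=b^2/(ab)^2$ and $1/b^2=a^2/(ab)^2$ and inserting the value of $ab$, the left-hand side of \eqref{q-iden-2} collapses to $\dfrac{(q;q)_\infty^2(q^{10};q^{10})_\infty^2}{(q^2;q^2)_\infty^2(q^5;q^5)_\infty^2}\,cd\,(a^2c+b^2d)$, which by $cd=(q^2;q^2)_\infty/(q^{10};q^{10})_\infty$ is exactly \eqref{q-iden-1}. Likewise, combining \eqref{q-iden-1} and \eqref{q-iden-3} gives $ac^3+q\,bd^3=\dfrac{(q^2;q^2)_\infty(q^5;q^5)_\infty^2}{(q^{10};q^{10})_\infty^3}\,a^2c$, and the evaluations of $ab$ and $cd$ turn this into \eqref{q-iden-4}. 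Thus the theorem reduces to \eqref{q-iden-1} and \eqref{q-iden-3}; equivalently, after multiplying through by $cd\,(q^5;q^5)_\infty^2$, to the two pure product identities
\[
a^2c^2d\,(q^5;q^5)_\infty^2=(q^{10};q^{10})_\infty^2\bigl(ac^3+q\,bd^3\bigr),\qquad b^2cd^2\,(q^5;q^5)_\infty^2=(q^{10};q^{10})_\infty^2\bigl(ac^3-q\,bd^3\bigr).
\]

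To prove these I would pass to theta functions. Jacobi's triple product gives $a=f(q,q^4)/(q^5;q^5)_\infty$, $b=f(q^2,q^3)/(q^5;q^5)_\infty$, $c=f(-q^4,-q^6)/(q^{10};q^{10})_\infty$ and $d=f(-q^2,-q^8)/(q^{10};q^{10})_\infty$; after substituting and clearing the eta-quotients the two displayed identities become relations purely among $f(q,q^4)$, $f(q^2,q^3)$, $f(-q^4,-q^6)$, $f(-q^2,-q^8)$. A short series computation then supplies the guiding observation
\[
f(-q^2,-q^8)=\sum_{n}(-1)^nq^{5n^2-3n},\qquad f(-q^4,-q^6)=\sum_{n}(-1)^nq^{5n^2-n},
\]
so that $f(-q^2,-q^8)$ and $f(-q^4,-q^6)$ are exactly the sign-alternating analogues of $f(q,q^4)=\sum_n q^{(5n^2-3n)/2}$ and $f(q^2,q^3)=\sum_n q^{(5n^2-n)/2}$ read at $q^2$. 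This expresses all four theta functions through the single pair $f(q,q^4),f(q^2,q^3)$ at the arguments $q$ and $q^2$, and I would finish either by invoking the degree-$2$ modular relation for these functions (equivalently for the Rogers--Ramanujan continued fraction), or by applying a Schröter-type product-to-sum formula to $f(q,q^4)f(-q^4,-q^6)$ and $f(q^2,q^3)f(-q^2,-q^8)$.

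The main obstacle is precisely this last step. The two surviving identities are not formal consequences of the triple product: they encode a genuine degree-$2$ theta relation linking the moduli $5$ and $10$, and the factor $1/(q^2;q^2)_\infty$ does not respect the $5$-dissection, so the explicit dissections of Theorems \ref{beau THM-1} and \ref{beau THM-2} cannot simply be substituted class by class. Concretely, one must match the splitting $ac^3\pm q\,bd^3$ on the right with the sum and difference $a^2c\pm b^2d$ on the left, i.e.\ show that the passage from $q$ to $q^2$ enacts the residue-doubling $\{1,4\}\leftrightarrow\{2,3\}\pmod 5$ that interchanges the two product sides. Establishing this interchange rigorously, rather than the bookkeeping around it, is where I expect the bulk of the work to lie.
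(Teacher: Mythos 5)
Your reduction of the four identities to \eqref{q-iden-1} and \eqref{q-iden-3} is correct, and it is in fact tidier than what the paper records (the paper proves \eqref{q-iden-1} and \eqref{q-iden-3} directly and dismisses \eqref{q-iden-2} and \eqref{q-iden-4} as ``similar''); the evaluations of $ab$, $cd$, $(q,q^{4};q^{5})_{\infty}=d/a$ and $(q^{2},q^{3};q^{5})_{\infty}=c/b$ all check out, as does your observation relating $f(-q^{2},-q^{8})$ and $f(-q^{4},-q^{6})$ to $f(q,q^{4})$ and $f(q^{2},q^{3})$ at $q^{2}$. But the argument stops exactly where the theorem lives: having reduced everything to two theta relations, you say you ``would finish either by invoking the degree-$2$ modular relation \dots{} or by applying a Schr\"oter-type product-to-sum formula,'' and you concede that establishing the residue-doubling interchange is where the bulk of the work lies. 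That step is never carried out, so \eqref{q-iden-1} and \eqref{q-iden-3} remain unproven. This is a genuine gap, not bookkeeping.

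The missing ingredient is concrete and short. Apply the addition formula \eqref{ff} with $(a,b,c,d)=(q,q^{4},-q^{2},-q^{3})$ and $(-q,-q^{4},q^{2},q^{3})$ to obtain
\begin{align*}
f(q,q^{4})f(-q^{2},-q^{3})&=f(-q^{3},-q^{7})f(-q^{4},-q^{6})+qf(-q,-q^{9})f(-q^{2},-q^{8}),\\
f(-q,-q^{4})f(q^{2},q^{3})&=f(-q^{3},-q^{7})f(-q^{4},-q^{6})-qf(-q,-q^{9})f(-q^{2},-q^{8});
\end{align*}
their sum and difference are precisely the interchange you describe. Combined with the elementary evaluations
$f(-q^{2},-q^{3})f(-q^{4},-q^{6})=\frac{(q^{2};q^{2})_{\infty}(q^{5};q^{5})_{\infty}}{(q^{10};q^{10})_{\infty}}f(-q^{3},-q^{7})$,
$f(-q,-q^{4})f(-q^{2},-q^{8})=\frac{(q^{2};q^{2})_{\infty}(q^{5};q^{5})_{\infty}}{(q^{10};q^{10})_{\infty}}f(-q,-q^{9})$ and
$f(q,q^{4})f(q^{2},q^{3})=\frac{(q^{2};q^{2})_{\infty}(q^{5};q^{5})_{\infty}^{3}}{(q;q)_{\infty}(q^{10};q^{10})_{\infty}}$,
both sides of \eqref{q-iden-1} reduce to $\frac{2(q^{2};q^{2})_{\infty}}{(q;q)_{\infty}(q^{5};q^{5})_{\infty}(q^{10};q^{10})_{\infty}}f(-q^{3},-q^{7})f(-q^{4},-q^{6})$, and the difference of the two displayed identities handles \eqref{q-iden-3} the same way. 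So your instinct that a Schr\"oter-type formula applied to these particular products is the right tool was sound; until you exhibit these two instances of \eqref{ff}, however, the core of the theorem is asserted rather than proved.
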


\begin{remark}
Very recently, Kim and Toh \cite[Lemma 3.1]{KT2018} proved the following two $q$-series identities via modular forms:
\begin{align}
\dfrac{(-q^{2},-q^{3},q^{5};q^{5})_{\infty}^{2}(q^{10};q^{10})_{\infty}}{(q^{4},q^{6};q^{10})_{\infty}} &+\dfrac{(-q,-q^{4},q^{5};q^{5})_{\infty}^{2}(q^{10};q^{10})_{\infty}}{(q^{2},q^{8};q^{10})_{\infty}}\nonumber\\
 &\quad=\dfrac{2(-q,-q^{4},q^{5};q^{5})_{\infty}(q^{2};q^{2})_{\infty}(q^{10};q^{10})_{\infty}^{2}}{(q^{2},q^{8};q^{10})_{\infty}^{3}(q^{5};q^{5})_{\infty}},
 \label{KT-iden-1}\\
\dfrac{(-q^{2},-q^{3},q^{5};q^{5})_{\infty}^{2}(q^{10};q^{10})_{\infty}}{(q^{4},q^{6};q^{10})_{\infty}} &+q\dfrac{(-q^{2},-q^{3},q^{5};q^{5})_{\infty}(q^{2};q^{2})_{\infty}(q^{10};q^{10})_{\infty}^{2}}{(q^{4},q^{6};q^{10})_{\infty}^{3}(q^{5};q^{5})_{\infty}}\nonumber\\
 &\quad=\dfrac{(-q,-q^{4},q^{5};q^{5})_{\infty}(q^{2};q^{2})_{\infty}(q^{10};q^{10})_{\infty}}{(q^{2},q^{8};q^{10})_{\infty}^{3}(q^{5};q^{5})_{\infty}}.\label{KT-iden-2}
\end{align}
Interestingly, \eqref{q-iden-1} and \eqref{q-iden-3} are equivalent to \eqref{KT-iden-1} and \eqref{KT-iden-2}, respectively.
\end{remark}

The rest of this paper is constructed as follows. In Sect. \ref{sec:lemmas}, we introduce some necessary notation as well as identities involving theta functions $\varphi(q)$ and $\psi(q)$. In Sect. \ref{sec:two theorems}, we prove Theorems \ref{beau THM-1} and \ref{beau THM-2}. The proofs of Theorems \ref{THM:relation} and \ref{THM:relation-2} are given in Sect. \ref{sec:THM-relation}. We conclude in the last section with some remarks to motivate further investigation.

\section{Preliminary results}\label{sec:lemmas}
Ramanujan's general theta function is defined by
\begin{align*}
f(a,b) &:=\sum_{n=-\infty}^{\infty}a^{n(n+1)/2}b^{n(n-1)/2},\quad |ab|<1.
\end{align*}
Basic properties enjoyed by $f(a,b)$ proved in \cite[p. 34, Entry 18]{Ber1991}
\begin{align}
f(a,b) &=f(b,a),\nonumber\\
f(1,a) &=2f(a,a^{3}).\label{iden:psi}
\end{align}
The function $f(a,b)$ satisfies the well-known Jacobi triple product identity \cite[p. 35, Entry 19]{Ber1991}:
\begin{align}
f(a,b) &=(-a,-b,ab;ab)_{\infty}.\label{JTP-identity}
\end{align}
Eq. \eqref{JTP-identity} is used frequently and without mention in the sequel.

The two important special cases of \eqref{JTP-identity} are \cite[Eqs. (1.5.4) and (1.5.5)]{Hirb2017}
\begin{align}
\varphi(q) &:=f(q,q)=\sum_{n=-\infty}^{\infty}q^{n^{2}}=\dfrac{(q^{2};q^{2})_{\infty}^{5}}{(q;q)_{\infty}^{2}(q^{4};q^{4})_{\infty}^{2}},\nonumber\\
\psi(q) &:=f(q,q^{3})=\sum_{n=0}^{\infty}q^{n(n+1)/2}=\dfrac{(q^{2};q^{2})_{\infty}^{2}}{(q;q)_{\infty}}.\label{def:psi}
\end{align}

\begin{lemma}
We have
\begin{align}
\varphi(q) &=\varphi(q^4)+2q\psi(q^8),\label{2-dissec-phi}\\
4q(q^{4};q^{4})_{\infty}(q^{20};q^{20})_{\infty} &=\varphi(q)\varphi(-q^5)-\varphi(-q)\varphi(q^5).\label{relati:phi-psi}
\end{align}
\end{lemma}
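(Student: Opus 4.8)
The plan is to prove both identities by splitting the defining series of $\varphi$ according to the parity of the summation index, and then to bootstrap \eqref{2-dissec-phi} into \eqref{relati:phi-psi}. For \eqref{2-dissec-phi} I would start from $\varphi(q)=\sum_{n=-\infty}^{\infty}q^{n^{2}}$ and separate even and odd $n$. The even indices $n=2m$ contribute $\sum_{m}q^{4m^{2}}=\varphi(q^{4})$, while the odd indices $n=2m+1$ contribute
\[
\sum_{m=-\infty}^{\infty}q^{(2m+1)^{2}}=q\sum_{m=-\infty}^{\infty}q^{4m(m+1)}=q\sum_{m=-\infty}^{\infty}(q^{8})^{m(m+1)/2}.
\]
Since the exponent $m(m+1)/2$ is invariant under $m\mapsto-m-1$, this bilateral sum is twice the one-sided sum $\sum_{m\geq0}(q^{8})^{m(m+1)/2}=\psi(q^{8})$, so the odd part equals $2q\psi(q^{8})$. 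Adding the two pieces gives \eqref{2-dissec-phi}.

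For \eqref{relati:phi-psi} the idea is to feed \eqref{2-dissec-phi} into the right-hand side. From \eqref{2-dissec-phi} we have $\varphi(\pm q)=\varphi(q^{4})\pm2q\psi(q^{8})$, and replacing $q$ by $q^{5}$ gives $\varphi(\pm q^{5})=\varphi(q^{20})\pm2q^{5}\psi(q^{40})$. Substituting these four expansions into $\varphi(q)\varphi(-q^{5})-\varphi(-q)\varphi(q^{5})$ and expanding, the even-times-even term $\varphi(q^{4})\varphi(q^{20})$ and the odd-times-odd term $4q^{6}\psi(q^{8})\psi(q^{40})$ occur symmetrically in both products and cancel, leaving only the cross terms:
\[
\varphi(q)\varphi(-q^{5})-\varphi(-q)\varphi(q^{5})=4q\bigl(\psi(q^{8})\varphi(q^{20})-q^{4}\varphi(q^{4})\psi(q^{40})\bigr).
\]
Hence \eqref{relati:phi-psi} is equivalent to the auxiliary identity $\psi(q^{8})\varphi(q^{20})-q^{4}\varphi(q^{4})\psi(q^{40})=(q^{4};q^{4})_{\infty}(q^{20};q^{20})_{\infty}$, or, after replacing $q^{4}$ by $q$,
\[
\psi(q^{2})\varphi(q^{5})-q\varphi(q)\psi(q^{10})=(q;q)_{\infty}(q^{5};q^{5})_{\infty}.\qquad(\star)
\]

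Proving $(\star)$ is where the real work lies, and I expect it to be the main obstacle. Writing $\psi$ and $\varphi$ as bilateral theta series and completing squares, the left side of $(\star)$ becomes a signed theta series of the quadratic form $u^{2}+5v^{2}$: the product $\psi(q^{2})\varphi(q^{5})$ collects the lattice points with $u\equiv3\pmod4$ and $v$ even, each weighted by $q^{(u^{2}+5v^{2}-1)/4}$, while $q\varphi(q)\psi(q^{10})$ collects those with $u$ even and $v\equiv3\pmod4$; so $(\star)$ asserts that this signed form-theta series equals the eta-product $(q;q)_{\infty}(q^{5};q^{5})_{\infty}$. I would prove it by a Schröter-type addition formula, rewriting each of the two products $\psi(q^{2})\varphi(q^{5})$ and $\varphi(q)\psi(q^{10})$ (theta functions of \emph{different} moduli) as a sum of products of theta functions, and then recombining via the Jacobi triple product \eqref{JTP-identity}; alternatively $(\star)$ may be read off from the dissections tabulated in \cite{Hirb2017}. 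The delicate point is the residue bookkeeping modulo $4$, since grouping the theta components with the wrong signs would produce the sum $\psi(q^{2})\varphi(q^{5})+q\varphi(q)\psi(q^{10})$ instead of the required difference. Once the theta components are matched correctly, the Jacobi triple product collapses them to $(q;q)_{\infty}(q^{5};q^{5})_{\infty}$, completing the proof.
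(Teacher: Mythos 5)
Your proof of \eqref{2-dissec-phi} is complete and correct: the parity split of $\sum q^{n^2}$, together with the invariance of $m(m+1)/2$ under $m\mapsto -m-1$, is the standard argument (the paper itself only cites Berndt, Entry 25, for this). Your reduction of \eqref{relati:phi-psi} is also algebraically correct: substituting $\varphi(\pm q)=\varphi(q^4)\pm 2q\psi(q^8)$ and $\varphi(\pm q^5)=\varphi(q^{20})\pm 2q^5\psi(q^{40})$ into the right-hand side does cancel the diagonal terms and leaves $4q\bigl(\psi(q^8)\varphi(q^{20})-q^4\varphi(q^4)\psi(q^{40})\bigr)$, so \eqref{relati:phi-psi} is equivalent, after $q^4\to q$, to
$\psi(q^{2})\varphi(q^{5})-q\varphi(q)\psi(q^{10})=(q;q)_{\infty}(q^{5};q^{5})_{\infty}$, your $(\star)$.

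The gap is that $(\star)$ is never proved: you explicitly defer it to ``a Schr\"oter-type addition formula'' or to looking it up in Hirschhorn's book, and you yourself flag it as the main obstacle. But $(\star)$ carries the entire content of \eqref{relati:phi-psi}; your computation shows the two statements are equivalent modulo the 2-dissection, which transfers the difficulty rather than resolving it. Worse, within this paper $(\star)$ \emph{is} Eq.~\eqref{phi-psi}, and the paper proves \eqref{phi-psi} by running exactly your algebra in the opposite direction, taking \eqref{relati:phi-psi} as the known input (cited to Berndt, p.~278). So you cannot appeal to the paper's later lemma, nor to any source that derives $(\star)$ from \eqref{relati:phi-psi}, without circularity. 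To close the gap you would need to actually execute the Schr\"oter/quintuple-product argument for $(\star)$ (the residue bookkeeping you sketch is plausible but unverified, and as you note the sign pattern is delicate), or else simply cite \eqref{relati:phi-psi} directly as the paper does.
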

\begin{proof}
Eq. \eqref{2-dissec-phi} follows from \cite[p. 40, Entry 25 (i), (ii)]{Ber1991} and Eq. \eqref{relati:phi-psi} appears in \cite[p. 278]{Ber1991}.
\end{proof}

The following lemma is the main ingredient for our proof.
\begin{lemma}
If $ab=cd$, then
\begin{align}
f(a,b)f(c,d)=f(ac,bd)f(ad,bc)+af\left(\frac{b}{c},\frac{c}{b}abcd\right)f\left(\frac{b}{d},\frac{d}{b}abcd\right).\label{ff}
\end{align}
\end{lemma}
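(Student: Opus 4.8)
The plan is to prove \eqref{ff} by expanding the left-hand side as an absolutely convergent double series and then resumming it along the two parity classes of the summation lattice, which is the standard device behind Schröter-type theta identities. Since $ab=cd$, the hypothesis amounts to $|ab|=|cd|<1$, so both $f(a,b)$ and $f(c,d)$ converge absolutely and their product may be written, and freely rearranged, as
\[
f(a,b)f(c,d)=\sum_{m=-\infty}^{\infty}\sum_{n=-\infty}^{\infty}a^{m(m+1)/2}b^{m(m-1)/2}c^{n(n+1)/2}d^{n(n-1)/2}.
\]
The first move is to record the summand in a symmetric shape: using $a^{m(m+1)/2}b^{m(m-1)/2}=(ab)^{m^2/2}(a/b)^{m/2}$, the analogous identity for the $c,d$ factor, and the hypothesis $ab=cd=:P$, each term becomes $P^{(m^2+n^2)/2}(a/b)^{m/2}(c/d)^{n/2}$.

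Next I would split $\mathbb{Z}^2$ according to the parity of $m-n$. On the even class I substitute $m=i+j$, $n=i-j$, and on the odd class $m=i+j+1$, $n=i-j$; each is a bijection of $\mathbb{Z}^2$ onto its class, with common inverse $i=\lfloor(m+n)/2\rfloor$, $j=\lfloor(m-n)/2\rfloor$, so nothing is lost or double-counted. A short computation of $(m^2+n^2)/2$ together with the exponents of $a/b$ and $c/d$ then decouples the $i$ and $j$ summations in each class. The even class produces $\bigl[\sum_i P^{i^2}(ac/bd)^{i/2}\bigr]\bigl[\sum_j P^{j^2}(ad/bc)^{j/2}\bigr]$, while the odd class first sheds the constant $P^{1/2}(a/b)^{1/2}=a$ and leaves $a\bigl[\sum_i P^{i^2+i}(ac/bd)^{i/2}\bigr]\bigl[\sum_j P^{j^2+j}(ad/bc)^{j/2}\bigr]$.

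The final step is to recognize each single sum as a theta value through the Gaussian form $\sum_k(\alpha\beta)^{k^2/2}(\alpha/\beta)^{k/2}=f(\alpha,\beta)$, read off from the definition. For the even class this is immediate via $(ac)(bd)=(ad)(bc)=abcd=P^2$, giving $f(ac,bd)f(ad,bc)$, the first term of \eqref{ff}. For the odd class the extra $P^{i}$ and $P^{j}$ force one to use $ab=cd$ more substantially, extracting a perfect square (e.g. $a^3bc^3d=(a^2bc)^2$ because $abcd=(ab)^2$) so that the arguments collapse to the monomials $f(b/c,(c/b)abcd)$ and $f(b/d,(d/b)abcd)$; together with the symmetry $f(x,y)=f(y,x)$ this yields the second term. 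Summing the two classes gives \eqref{ff}. The analysis is routine, so I expect the main obstacle to be purely bookkeeping: verifying that the two substitutions genuinely partition $\mathbb{Z}^2$, and checking that every fractional power of $a,b,c,d$ generated along the way cancels. This last point is exactly where the constraint $ab=cd$ is indispensable, for without it the decoupled single sums would not close up into honest theta functions with monomial arguments.
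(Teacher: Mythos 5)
Your proof is correct and complete in outline: the parity split of $\mathbb{Z}^2$ via $m=i+j$, $n=i-j$ (and its shift for the odd class), the decoupling into four single theta sums, and the identifications of the odd-class factors as $f(b/c,(c/b)abcd)=f(a^{2}bc,d/a)$ and $f(b/d,(d/b)abcd)=f(a^{2}bd,c/a)$ all check out, with $ab=cd$ used exactly where you say it is needed to resolve the fractional powers into monomials. The paper does not prove this lemma itself --- it only cites Berndt's Entry 29 and Cooper's Theorem 0.6 --- and your argument is precisely the standard Schr\"oter-type proof given in those references, so you have simply supplied the argument the paper leaves to the literature.
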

\begin{proof}
Eq. \eqref{ff} comes from \cite[p. 45, Entry 29]{Ber1991} and \cite[p. 9, Theorem 0.6]{Coob2017}.
\end{proof}

Finally, we need the following two identities involving $\varphi(q)$ and $\psi(q)$.
\begin{lemma}
We have
\begin{align}
\varphi(q)-\varphi(q^{5}) &=2q\dfrac{(q^{4},q^{6},q^{10},q^{14},q^{16},q^{20};q^{20})_{\infty}}{(q^{3},q^{7},q^{8},q^{12},q^{13},q^{17};q^{20})_{\infty}},\label{phi-psi-1}\\
\psi(q^{2})-q\psi(q^{10}) &=\dfrac{(q,q^{9},q^{10},q^{11},q^{19},q^{20};q^{20})_{\infty}}{(q^{2},q^{3},q^{7},q^{13},q^{17},q^{18};q^{20})_{\infty}}.\label{phi-psi-2}
\end{align}
\end{lemma}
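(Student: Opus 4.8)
The plan is to prove \eqref{phi-psi-1} and \eqref{phi-psi-2} by reducing each to a multiplication formula for Ramanujan's theta function, using the Jacobi triple product identity \eqref{JTP-identity}, the quintuple product identity, and the addition formula \eqref{ff}. The guiding observation is that, although both right-hand sides are displayed as quotients of products to the modulus $q^{20}$, each denominator completes to a single Rogers--Ramanujan product. For \eqref{phi-psi-2} I would multiply numerator and denominator by $(q^{8},q^{12};q^{20})_{\infty}$; the denominator then becomes
\[
(q^{2},q^{3},q^{7},q^{8},q^{12},q^{13},q^{17},q^{18};q^{20})_{\infty}=(q^{2},q^{3};q^{5})_{\infty},
\]
while the numerator becomes $(q,q^{8},q^{9},q^{10},q^{11},q^{12},q^{19},q^{20};q^{20})_{\infty}$. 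For \eqref{phi-psi-1} the same completion is achieved by the factor $(q^{2},q^{18};q^{20})_{\infty}$, again turning the denominator into $(q^{2},q^{3};q^{5})_{\infty}$ and the numerator into $(q^{2},q^{4},q^{6},q^{10},q^{14},q^{16},q^{18},q^{20};q^{20})_{\infty}$.

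The purpose of the completion is that each resulting numerator is a quintuple product. Specializing the quintuple product identity with base $q^{10}$ to the parameter values $z=q$ (for \eqref{phi-psi-2}) and $z=q^{4}$ (for \eqref{phi-psi-1}), and then invoking \eqref{JTP-identity}, I would rewrite the numerators as differences of two theta functions,
\begin{align*}
(q,q^{8},q^{9},q^{10},q^{11},q^{12},q^{19},q^{20};q^{20})_{\infty} &=f(q^{13},q^{17})-qf(q^{7},q^{23}),\\
(q^{2},q^{4},q^{6},q^{10},q^{14},q^{16},q^{18},q^{20};q^{20})_{\infty} &=f(q^{8},q^{22})-q^{2}f(q^{2},q^{28}).
\end{align*}
Clearing the single denominator $(q^{2},q^{3};q^{5})_{\infty}$ then reduces the two target identities to the clean theta relations
\begin{align*}
\big(\psi(q^{2})-q\psi(q^{10})\big)(q^{2},q^{3};q^{5})_{\infty} &=f(q^{13},q^{17})-qf(q^{7},q^{23}),\\
\big(\varphi(q)-\varphi(q^{5})\big)(q^{2},q^{3};q^{5})_{\infty} &=2qf(q^{8},q^{22})-2q^{3}f(q^{2},q^{28}),
\end{align*}
which I have checked agree through several coefficients and which I expect to be the genuine content of the lemma.

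To finish, I would recast the left-hand sides as products of two theta functions. Writing $(q^{2},q^{3};q^{5})_{\infty}=f(-q^{2},-q^{3})/(q^{5};q^{5})_{\infty}$ by \eqref{JTP-identity} and using $\varphi(q)=f(q,q)$, $\psi(q^{2})=f(q^{2},q^{6})$, $\psi(q^{10})=f(q^{10},q^{30})$ from the definitions, the two core relations become statements about products such as $f(q^{2},q^{6})f(-q^{2},-q^{3})$ and $f(q^{10},q^{30})f(-q^{2},-q^{3})$. I would expand these by the addition formula \eqref{ff} and collect terms, showing that the cross terms cancel and reassemble into $(q^{5};q^{5})_{\infty}$ times the quintuple-product difference on the right.

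The main obstacle lies in this last step: the relevant theta products have unequal moduli (so $ab\neq cd$), and hence \eqref{ff} does not apply verbatim. One must instead invoke the general Schr\"oter multiplication formula (available in the sources cited for \eqref{ff}), or equivalently expand each product as a double bilateral series and carry out the residue dissection by hand. Controlling the index shifts and signs in that dissection, so that the surviving terms combine into exactly the factor $(q^{5};q^{5})_{\infty}$ and the claimed theta difference, is the delicate part; once this bookkeeping is complete the identification is routine. As a safeguard, since both sides are theta quotients of bounded level, the final matching can also be secured by verifying finitely many coefficients.
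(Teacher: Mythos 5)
Your preparatory reductions are correct and check out: multiplying numerator and denominator of \eqref{phi-psi-2} by $(q^{8},q^{12};q^{20})_{\infty}$ (respectively of \eqref{phi-psi-1} by $(q^{2},q^{18};q^{20})_{\infty}$) does turn each denominator into $(q^{2},q^{3};q^{5})_{\infty}$, each completed numerator is indeed a quintuple product to base $q^{10}$ with $z=q$ and $z=q^{4}$, and the resulting ``clean'' relations
\begin{align*}
\bigl(\psi(q^{2})-q\psi(q^{10})\bigr)(q^{2},q^{3};q^{5})_{\infty} &=f(q^{13},q^{17})-qf(q^{7},q^{23}),\\
\bigl(\varphi(q)-\varphi(q^{5})\bigr)(q^{2},q^{3};q^{5})_{\infty} &=2qf(q^{8},q^{22})-2q^{3}f(q^{2},q^{28})
\end{align*}
agree with the coefficients I computed. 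The difficulty is that these two displays \emph{are} the lemma; everything up to this point is routine rewriting, and your argument stops exactly where the real work begins. You yourself note that \eqref{ff} is inapplicable to $f(q^{2},q^{6})f(-q^{2},-q^{3})$ and $f(q,q)f(-q^{2},-q^{3})$ because $ab\neq cd$, and the two escape routes you offer are both left unexecuted: Schr\"oter's multiplication formula is invoked without specifying the parameters or carrying out the ``delicate'' index and sign bookkeeping you acknowledge, and ``verifying finitely many coefficients'' is not a proof unless you first identify both sides as modular forms of a specific weight and level and quote a Sturm-type bound, none of which is set up. As it stands the proposal is a plausible plan with a correct first half, not a proof.

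For comparison, the paper does not prove the lemma at all; it simply cites Hirschhorn \cite[p.~311, Eqs.~(34.1.8) and (34.1.12)]{Hirb2017}. So a completed version of your argument would be a genuinely self-contained alternative, and the quintuple-product normalization you found is a sensible skeleton for one --- but to count as a proof you must either push the Schr\"oter dissection through explicitly or supply the modular-forms bound that legitimizes a finite coefficient check.
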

\begin{proof}
Eqs. \eqref{phi-psi-1} and \eqref{phi-psi-2} are proved in \cite[p. 311, Eqs. (34.1.8) and (34.1.12)]{Hirb2017}.
\end{proof}

\section{Proofs of Theorems \ref{beau THM-1} and \ref{beau THM-2}}\label{sec:two theorems}
To obtain \eqref{a1:5n}--\eqref{a1:5n+4}, we first prove two necessary lemmas.

Let $k>0, l\geq0$ be integers and let $G(q)=\sum\limits_{n=0}^{\infty}g(n)q^{n}$ be a formal power series. Define an operator $H_{k,l}$ by
\begin{align*}
H_{k,l}\left(G(q)\right) &:=\sum_{n=0}^{\infty}g(kn+l)q^{kn+l}.
\end{align*}

\begin{lemma}
Define
\begin{align*}
M_{1}(q) &:=f(q^{18},q^{22})^{2}-q^{8}f(q^{2},q^{38})^{2},\\
N_{1}(q) &:=q^{5}f(q^{12},q^{28})f(q^{2},q^{48})+q^{6}f(q^{8},q^{32})f(q^{2},q^{48})\\
 &\quad-qf(q^{12},q^{28})f(q^{18},q^{22})-q^{2}f(q^{8},q^{32})f(q^{18},q^{22}).
\end{align*}
Then
\begin{align}
\varphi(q)M_{1}(q)+2\psi(q^{2})N_{1}(q) =\varphi(q^{5})M_{1}(q)+2q\psi(q^{10})N_{1}(q).\label{iden:M-N}
\end{align}
\end{lemma}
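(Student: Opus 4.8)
The plan is to collect the two sides of \eqref{iden:M-N} into the single relation
\[
\bigl(\varphi(q)-\varphi(q^{5})\bigr)M_{1}(q)=-2\bigl(\psi(q^{2})-q\psi(q^{10})\bigr)N_{1}(q),
\]
and then to factor $M_{1}$ and $N_{1}$ into theta functions, so that the explicit product evaluations \eqref{phi-psi-1} and \eqref{phi-psi-2} can be substituted for the two differences, leaving a pure infinite-product identity.

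First I would simplify $M_{1}$. Applying the addition formula \eqref{ff} to $f(q^{18},q^{22})^{2}$ (taking $a=c=q^{18}$, $b=d=q^{22}$) and to $f(q^{2},q^{38})^{2}$ (taking $a=c=q^{2}$, $b=d=q^{38}$), and using $f(1,q^{80})=2\psi(q^{80})$ from \eqref{iden:psi}, both squares expand through the common pieces $f(q^{36},q^{44})$, $f(q^{4},q^{76})$, $\varphi(q^{40})$ and $\psi(q^{80})$. Collecting these yields
\[
M_{1}(q)=\bigl(f(q^{36},q^{44})-q^{8}f(q^{4},q^{76})\bigr)\bigl(\varphi(q^{40})-2q^{10}\psi(q^{80})\bigr).
\]
The second factor is $\varphi(-q^{10})$ by \eqref{2-dissec-phi} applied with $q\mapsto-q^{10}$, while splitting the defining series of $f(-q^{8},-q^{12})$ according to the parity of its summation index shows that the first factor equals $f(-q^{8},-q^{12})$. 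Hence $M_{1}(q)=f(-q^{8},-q^{12})\varphi(-q^{10})$.

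For $N_{1}$ I would group its four summands in pairs, factoring out $f(q^{12},q^{28})+qf(q^{8},q^{32})$; the same parity splitting identifies this sum as $f(q,q^{9})$, so that
\[
N_{1}(q)=q\,f(q,q^{9})\bigl(q^{4}f(q^{2},q^{48})-f(q^{18},q^{22})\bigr).
\]
Substituting these factorizations together with \eqref{phi-psi-1}, \eqref{phi-psi-2} and the Jacobi triple product \eqref{JTP-identity} for $f(-q^{8},-q^{12})$, $f(q,q^{9})$ and $\varphi(-q^{10})$, and cancelling the infinite products common to both sides, the whole identity collapses to the single theta evaluation
\[
f(q^{18},q^{22})-q^{4}f(q^{2},q^{48})=(q^{4},q^{6},q^{10},q^{14},q^{16},q^{20};q^{20})_{\infty}.
\]

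This last identity is the crux, and the step I expect to be the main obstacle. Its left-hand side is a difference of theta functions with two different bases ($q^{40}$ and $q^{50}$), so it is not itself a Jacobi triple product, and one must show that the cross terms cancel to leave the stated single product. Notably, this product is exactly the numerator occurring in \eqref{phi-psi-1}, which strongly suggests the identity can be obtained by the same method that proves \eqref{phi-psi-1} and \eqref{phi-psi-2} — for instance via the quintuple product identity, or a further application of \eqref{ff} to symmetrise the two theta functions, or by matching Euler products residue class by residue class modulo $20$. Once this product form is in hand, the remaining verification is routine infinite-product bookkeeping.
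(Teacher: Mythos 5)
Your overall strategy is the same as the paper's: rewrite \eqref{iden:M-N} as $\left(\varphi(q)-\varphi(q^{5})\right)M_{1}(q)+2\left(\psi(q^{2})-q\psi(q^{10})\right)N_{1}(q)=0$, factor $M_{1}$ and $N_{1}$ into single theta products, and let \eqref{phi-psi-1}, \eqref{phi-psi-2} and the triple product identity do the rest. Your factorizations are also the right ones. The paper obtains $M_{1}(q)=f(-q^{8},-q^{12})f(-q^{10},-q^{10})$ in one stroke by taking $(a,b,c,d)=(-q^{8},-q^{12},-q^{10},-q^{10})$ in \eqref{ff}, whereas you expand the two squares and recombine --- more work, but it lands on the same product since $\varphi(q^{40})-2q^{10}\psi(q^{80})=\varphi(-q^{10})$. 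For $N_{1}$ your route is actually cleaner than the paper's: you factor $N_{1}$ directly as $q\bigl(f(q^{12},q^{28})+qf(q^{8},q^{32})\bigr)\bigl(q^{4}f(q^{2},q^{38})-f(q^{18},q^{22})\bigr)$ and identify the first factor as $f(q,q^{9})$ by splitting the summation index by parity, where the paper needs three separate applications of \eqref{ff}.

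Two points need attention. First, the $q^{48}$ in the statement of $N_{1}$ is a typo for $q^{38}$ (compare the lemma's use in \eqref{gf:a5n}, where the same combination appears with $f(q^{2},q^{38})$); with $q^{48}$ your ``crux'' identity is actually false --- the two sides already disagree in the coefficient of $q^{42}$. Second, and more importantly, once this is corrected the step you single out as the main obstacle is not an obstacle at all and needs no quintuple product: it is the very same parity splitting you already used for $f(q,q^{9})$. Indeed $f(-q^{4},-q^{6})=\sum_{n}(-1)^{n}q^{5n^{2}-n}$, and separating even and odd $n$ gives exactly $f(q^{18},q^{22})-q^{4}f(q^{2},q^{38})$; the product form $(q^{4},q^{6},q^{10};q^{10})_{\infty}=(q^{4},q^{6},q^{10},q^{14},q^{16},q^{20};q^{20})_{\infty}$ is then immediate from \eqref{JTP-identity}. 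With that step filled in you recover $N_{1}(q)=-qf(q,q^{9})f(-q^{4},-q^{6})$, which is the paper's \eqref{represe:N}, and the final cancellation against $\left(\varphi(q)-\varphi(q^{5})\right)M_{1}(q)$ goes through exactly as you describe.
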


\begin{proof}
Putting $(a,b,c,d)=(-q^{8},-q^{12},-q^{10},-q^{10})$ in \eqref{ff}, we get
\begin{align}
M_{1}(q)=f(q^{18},q^{22})^{2}-q^{8}f(q^{2},q^{38})^{2}=f(-q^{8},-q^{12})f(-q^{10},-q^{10}).\label{represe:M}
\end{align}
Similarly, taking $(a,b,c,d)=(-q^{5},-q^{15},-q^{7},-q^{13})$ in \eqref{ff},
\begin{align}
qf(q^{12},q^{28})f(q^{18},q^{22})-q^{6}f(q^{8},q^{32})f(q^{2},q^{48})=qf(-q^{5},-q^{15})f(-q^{7},-q^{13}).\label{iden:N1}
\end{align}
Picking $(a,b,c,d)=(-q^{3},-q^{17},-q^{5},-q^{15})$ in \eqref{ff},
\begin{align}
q^{2}f(q^{8},q^{32})f(q^{18},q^{22})-q^{5}f(q^{8},q^{32})f(q^{2},q^{48})=q^{2}f(-q^{3},-q^{17})f(-q^{5},-q^{15}).\label{iden:N2}
\end{align}
Finally, taking $(a,b,c,d)=(q,q^{9},-q^{4},-q^{6})$ in \eqref{ff},
\begin{align}
f(-q^{5},-q^{15})f(-q^{7},-q^{13})+qf(-q^{3},-q^{17})f(-q^{5},-q^{15})=f(q,q^{9})f(-q^{4},-q^{6}).\label{iden:N3}
\end{align}
Employing \eqref{iden:N1}--\eqref{iden:N3}, we readily obtain
\begin{align}
N_{1}(q)=-qf(q,q^{9})f(-q^{4},-q^{6}).\label{represe:N}
\end{align}
Now, with the help of \eqref{phi-psi-1}, \eqref{phi-psi-2}, \eqref{represe:M}, and \eqref{represe:N},
\begin{align*}
 &\left(\varphi(q)-\varphi(q^{5})\right)M_{1}(q)+2\left(\psi(q^{2})-q\psi(q^{10})\right)N_{1}(q)\\
 =&2q\dfrac{(q^{4},q^{6},q^{10},q^{14},q^{16},q^{20};q^{20})_{\infty}}{(q^{3},q^{7},q^{8},q^{12},q^{13},q^{17};q^{20})_{\infty}}
 \times(q^{8},q^{10},q^{10},q^{12},q^{20},q^{20};q^{20})_{\infty}\\
 &\quad-2q\dfrac{(q,q^{9},q^{10},q^{11},q^{19},q^{20};q^{20})_{\infty}}{(q^{2},q^{3},q^{7},q^{13},q^{17},q^{18};q^{20})_{\infty}}\times
 (-q,q^{4},q^{6},-q^{9},q^{10},q^{10};q^{10})_{\infty}\\
 =&\dfrac{(q^{4},q^{6},q^{10},q^{10},q^{10};q^{10})_{\infty}}{(q^{3},q^{7};q^{10})_{\infty}}-\dfrac{(q^{4},q^{6},q^{10},q^{10},q^{10};q^{10})_{\infty}}
{(q^{3},q^{7};q^{10})_{\infty}}\\
 =&0,
\end{align*}
as desired.
\end{proof}

\begin{lemma}
We have
\begin{align}
\psi(q^{2})\varphi(q^{5})-q\varphi(q)\psi(q^{10})=(q;q)_{\infty}(q^{5};q^{5})_{\infty}.\label{phi-psi}
\end{align}
\end{lemma}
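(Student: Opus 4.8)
The plan is to derive \eqref{phi-psi} from the mixed product identity \eqref{relati:phi-psi} by dissecting each theta factor on its right-hand side according to parity. First I would record the four relevant halves of the $2$-dissection \eqref{2-dissec-phi}: directly, $\varphi(q)=\varphi(q^4)+2q\psi(q^8)$, and replacing $q$ by $-q$, $\varphi(-q)=\varphi(q^4)-2q\psi(q^8)$; applying \eqref{2-dissec-phi} with $q$ replaced by $\pm q^5$ gives likewise $\varphi(q^5)=\varphi(q^{20})+2q^5\psi(q^{40})$ and $\varphi(-q^5)=\varphi(q^{20})-2q^5\psi(q^{40})$.

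Next I would substitute these four expressions into the right-hand side of \eqref{relati:phi-psi} and expand the two products $\varphi(q)\varphi(-q^5)$ and $\varphi(-q)\varphi(q^5)$. The key observation is that in the difference $\varphi(q)\varphi(-q^5)-\varphi(-q)\varphi(q^5)$ the two ``even'' cross terms $\varphi(q^4)\varphi(q^{20})$ and $\psi(q^8)\psi(q^{40})$ occur with identical coefficients in both products and therefore cancel, whereas the two ``odd'' cross terms survive and double. Concretely, the difference collapses to $4q\bigl(\psi(q^8)\varphi(q^{20})-q^4\varphi(q^4)\psi(q^{40})\bigr)$.

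Equating this with the left-hand side $4q(q^4;q^4)_\infty(q^{20};q^{20})_\infty$ of \eqref{relati:phi-psi} and cancelling the common factor $4q$ yields the intermediate identity $\psi(q^8)\varphi(q^{20})-q^4\varphi(q^4)\psi(q^{40})=(q^4;q^4)_\infty(q^{20};q^{20})_\infty$. Finally I would replace $q^4$ by $q$ throughout, which sends $\psi(q^8)\mapsto\psi(q^2)$, $\varphi(q^{20})\mapsto\varphi(q^5)$, $\varphi(q^4)\mapsto\varphi(q)$, and $\psi(q^{40})\mapsto\psi(q^{10})$, producing exactly \eqref{phi-psi}.

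I do not anticipate a genuine obstacle here; the argument is short once the mechanism is seen. The only point demanding care is the sign bookkeeping among the four dissected factors, so that the $\varphi(q^4)\varphi(q^{20})$ and $\psi(q^8)\psi(q^{40})$ contributions truly cancel rather than reinforce, together with checking that the surviving expression carries precisely the factor $4q$ matching the left side of \eqref{relati:phi-psi}. After that verification, the final rescaling $q^4\mapsto q$ is purely formal and completes the proof.
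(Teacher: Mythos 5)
Your proposal is correct and follows essentially the same route as the paper: both use the $2$-dissection \eqref{2-dissec-phi} (and its images under $q\mapsto -q$, $q\mapsto\pm q^5$) to expand $\varphi(q)\varphi(-q^5)-\varphi(-q)\varphi(q^5)$, observe the cancellation of the even cross terms, equate with \eqref{relati:phi-psi}, and rescale $q^4\mapsto q$. The sign bookkeeping you flag does work out exactly as you describe, so nothing further is needed.
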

\begin{proof}
Firstly, replacing $q$ by $-q^{5}$ in \eqref{2-dissec-phi}, we find that
\begin{align}
\varphi(-q^5)=\varphi(q^{20})-2q^5\psi(q^{40}).\label{5-identity}
\end{align}
Combining \eqref{2-dissec-phi} and \eqref{5-identity} yields
\begin{align}
\varphi(q)\varphi(-q^5)=\varphi(q^4)\varphi(q^{20})+2q\psi(q^8)\varphi(q^{20})-2q^5\varphi(q^4)\psi(q^{40})-4q^6\psi(q^8)\psi(q^{40}).\label{relati:phi-psi-2}
\end{align}
Replacing $q$ by $-q$ in \eqref{relati:phi-psi-2},
\begin{align}
\varphi(-q)\varphi(q^5)=\varphi(q^4)\varphi(q^{20})-2q\psi(q^8)\varphi(q^{20})+2q^5\varphi(q^4)\psi(q^{40})-4q^6\psi(q^8)\psi(q^{40}).\label{relati:phi-psi-3}
\end{align}
By \eqref{relati:phi-psi-2} and \eqref{relati:phi-psi-3},
\begin{align}
\varphi(q)\varphi(-q^5)-\varphi(-q)\varphi(q^5)=4q\psi(q^8)\varphi(q^{20})-4q^5\varphi(q^4)\psi(q^{40}).\label{relati:phi-psi-4}
\end{align}
Finally, substituting \eqref{relati:phi-psi} into \eqref{relati:phi-psi-4} and replacing $q$ by $q^{1/4}$, we obtain \eqref{phi-psi}.
\end{proof}

Now we turn to prove \eqref{a1:5n}.

On one hand, according to \cite{Tang2018}, we find that
\begin{align}
\sum_{n=0}^{\infty}g_{1}(n)q^{n} &=\dfrac{\varphi(q^{5})}{(q^{5};q^{5})_{\infty}^{2}(q^{10};q^{10})_{\infty}}\big(S_{1}-q^{4}S_{2}+q^{2}S_{3}-q^{6}S_{4}\big)\nonumber\\
 &\quad+\dfrac{2\psi(q^{10})}{(q^{5};q^{5})_{\infty}^{2}(q^{10};q^{10})_{\infty}}\big(qS_{5}-q^{5}S_{6}+q^{4}S_{7}-q^{8}S_{8}\big),\label{gf:all a}
\end{align}
where
\begin{align*}
S_{1} &=\sum_{m,n=-\infty}^{\infty}q^{20m^{2}+2m+20n^{2}+6n},\quad S_{2}=\sum_{m,n=-\infty}^{\infty}q^{20m^{2}+18m+20n^{2}+6n},\\
S_{3} &=\sum_{m,n=-\infty}^{\infty}q^{20m^{2}+2m+20n^{2}+14n},\quad S_{4}=\sum_{m,n=-\infty}^{\infty}q^{20m^{2}+18m+20n^{2}+14n},\\
S_{5} &=\sum_{m,n=-\infty}^{\infty}q^{20m^{2}+2m+20n^{2}+4n},\quad S_{6}=\sum_{m,n=-\infty}^{\infty}q^{20m^{2}+18m+20n^{2}+4n},\\
S_{7} &=\sum_{m,n=-\infty}^{\infty}q^{20m^{2}+2m+20n^{2}+16n},\quad S_{8}=\sum_{m,n=-\infty}^{\infty}q^{20m^{2}+18m+20n^{2}+16n}.
\end{align*}

In $S_{1}$, if $2m+6n\equiv0\pmod{5}$, then $2m+n\equiv0\pmod{5}$. Equivalently, $m-2n\equiv0\pmod{5}$. Assume $2m+n=5r$ and $m-2n=-5s$, it follows that $m=2r-s$ and $n=r+2s$. Therefore
\begin{align}
H_{5,0}(S_{1})=\sum_{r,s=-\infty}^{\infty}q^{100r^{2}+10r+100s^{2}+10s}=f(q^{90},q^{110})^{2}.\label{S1:5}
\end{align}

Similarly, we obtain
\begin{align}
H_{5,0}(q^{4}S_{2}) &=q^{20}f(q^{10},q^{190})f(q^{90},q^{110}),\label{S2:5}\\
H_{5,0}(q^{2}S_{3}) &=q^{20}f(q^{10},q^{190})f(q^{90},q^{110}),\label{S3:5}\\
H_{5,0}(q^{6}S_{4}) &=q^{40}f(q^{10},q^{190})^{2},\label{S4:5}\\
H_{5,0}(qS_{5}) &=q^{25}f(q^{60},q^{140})f(q^{10},q^{190}),\label{S5:5}\\
H_{5,0}(q^{5}S_{6}) &=q^{5}f(q^{60},q^{140})f(q^{90},q^{110}),\label{S6:5}\\
H_{5,0}(q^{4}S_{7}) &=q^{30}f(q^{40},q^{160})f(q^{10},q^{190}),\label{S7:5}\\
H_{5,0}(q^{8}S_{8}) &=q^{10}f(q^{40},q^{160})f(q^{90},q^{110}).\label{S8:5}
\end{align}
Picking out the term involving $q^{5n}$ in \eqref{gf:all a}, applying \eqref{S1:5}--\eqref{S8:5} and replacing $q^{5}$ by $q$, we obtain
\begin{align}\label{gf:a5n}
\sum_{n=0}^{\infty}g_{1}(5n)q^{n}
&=\dfrac{\varphi(q)}{(q;q)_{\infty}^{2}(q^{2};q^{2})_{\infty}}\left(f(q^{18},q^{22})^{2}-q^{8}f(q^{2},q^{38})^{2}\right)\nonumber\\
 &\quad+\dfrac{2\psi(q^{2})}{(q;q)_{\infty}^{2}(q^{2};q^{2})_{\infty}}\big(q^{5}f(q^{12},q^{28})f(q^{2},q^{38})+q^{6}
 f(q^{8},q^{32})f(q^{2},q^{38})\nonumber\\
 &\quad-qf(q^{12},q^{28})f(q^{18},q^{22})-q^{2}f(q^{8},q^{32})f(q^{18},q^{22})\big).
\end{align}

On the other hand,
\begin{align}
\dfrac{1}{(q,q^{4};q^{5})_{\infty}^{2}} &=\dfrac{(q^{2},q^{3},q^{5};q^{5})_{\infty}^{2}}{(q;q)_{\infty}^{2}} =\dfrac{1}{(q;q)_{\infty}^{2}}\sum_{m,n=-\infty}^{\infty}(-1)^{m+n}q^{(5m^{2}+m)/2+(5n^{2}+n)/2}\nonumber\\
 &=\dfrac{1}{(q;q)_{\infty}^{2}}\Bigg(\sum_{r,s=-\infty}^{\infty}q^{(5(r+s)^{2}+(r+s))/2+(5(r-s)^{2}+(r-s))/2}\nonumber\\
&\quad\quad-\sum_{r,s=-\infty}^{\infty}q^{(5(r+s-1)^{2}+(r+s-1))/2+(5(r-s)^{2}+(r-s))/2}\Bigg)\nonumber\\
 &=\dfrac{1}{(q;q)_{\infty}^{2}}\left(\sum_{m,n=-\infty}^{\infty}q^{5m^{2}+m+5n^{2}}-\sum_{m,n=-\infty}^{\infty}q^{5m^{2}+5m+5n^{2}+4n}\right)\nonumber\\
 &=\dfrac{\varphi(q^{5})}{(q;q)_{\infty}^{2}}\sum_{m=-\infty}^{\infty}q^{5m^{2}+m}-\dfrac{2q^{2}\psi(q^{10})}{(q;q)_{\infty}^{2}}\sum_{m=-\infty}^{\infty}q^{5m^{2}+4m}
 \nonumber\\
 &=\dfrac{\varphi(q^{5})}{(q;q)_{\infty}^{2}}\left(\sum_{m=-\infty}^{\infty}q^{20m^{2}+2m}+q^{4}\sum_{m=-\infty}^{\infty}q^{20m^{2}+18m}\right)\nonumber\\
 &\quad-\dfrac{2\psi(q^{10})}{(q;q)_{\infty}^{2}}\left(q^{2}\sum_{m=-\infty}^{\infty}q^{20m^{2}+8m}+q^{3}\sum_{m=-\infty}^{\infty}q^{20m^{2}+12m}\right).\label{iden-1}
\end{align}

Moreover,
\begin{align}
\dfrac{1}{(q^{2},q^{8};q^{10})_{\infty}} &=\dfrac{(q^{4},q^{6},q^{10};q^{10})_{\infty}}{(q^{2};q^{2})_{\infty}}=\dfrac{1}{(q^{2};q^{2})_{\infty}}
\sum_{m=-\infty}^{\infty}(-1)^{m}q^{5m^{2}+m}\nonumber\\
 &=\dfrac{1}{(q^{2};q^{2})_{\infty}}\left(\sum_{m=-\infty}^{\infty}q^{20m^{2}+2m}-q^{4}\sum_{m=-\infty}^{\infty}q^{20m^{2}+18m}\right).\label{iden-2}
\end{align}

Combining \eqref{iden-1} and \eqref{iden-2} yields
\begin{align}
 &\dfrac{1}{(q,q^{4};q^{5})_{\infty}^{2}(q^{2},q^{8};q^{10})_{\infty}}\nonumber\\
 =&\Bigg(\dfrac{\varphi(q^{5})}{(q;q)_{\infty}^{2}}\left(\sum_{m=-\infty}^{\infty}q^{20m^{2}+2m}+q^{4}\sum_{m=-\infty}^{\infty}q^{20m^{2}+18m}\right)\nonumber\\
 &\quad-\dfrac{2\psi(q^{10})}{(q;q)_{\infty}^{2}}\left(q^{2}\sum_{m=-\infty}^{\infty}q^{20m^{2}+8m}+q^{3}\sum_{m=-\infty}^{\infty}q^{20m^{2}+12m}\right)\Bigg)\nonumber\\
 &\quad\times\dfrac{1}{(q^{2};q^{2})_{\infty}}\left(\sum_{m=-\infty}^{\infty}q^{20m^{2}+2m}-q^{4}\sum_{m=-\infty}^{\infty}q^{20m^{2}+18m}\right)\nonumber\\
 =&\dfrac{\varphi(q^{5})}{(q;q)_{\infty}^{2}(q^{2};q^{2})_{\infty}}\left(f(q^{18},q^{22})^{2}-q^{8}f(q^{2},q^{38})^{2}\right)\nonumber\\
 &\quad+\dfrac{2\psi(q^{10})}{(q;q)_{\infty}^{2}(q^{2};q^{2})_{\infty}}\big(q^{6}f(q^{12},q^{28})f(q^{2},q^{38})+q^{7}f(q^{8},q^{32})f(q^{2},q^{38})\nonumber\\
 &\quad-q^{2}f(q^{12},q^{28})f(q^{18},q^{22})-q^{3}f(q^{8},q^{32})f(q^{18},q^{22})\big).\label{iden:A0}
\end{align}

Eq. \eqref{a1:5n} follows from \eqref{iden:M-N}, \eqref{gf:a5n}, and \eqref{iden:A0}.

Next we are ready to prove \eqref{a1:5n+1}.

Following the same line of proving \eqref{gf:a5n}, we obtain
\begin{align}\label{gf:a5n+1}
\sum_{n=0}^{\infty}g_{1}(5n+1)q^{n}
&=\dfrac{2\varphi(q)}{(q;q)_{\infty}^{2}(q^{2};q^{2})_{\infty}}\left(q^{5}f(q^{10},q^{30})f(q^{2},q^{38})-qf(q^{10},q^{30})f(q^{18},q^{22})\right)\nonumber\\
 &\quad+\dfrac{2\psi(q^{2})}{(q;q)_{\infty}^{2}(q^{2};q^{2})_{\infty}}\big(f(q^{20},q^{20})f(q^{18},q^{22})+2q^{5}
 f(q^{40},q^{120})f(q^{18},q^{22})\nonumber\\
 &\quad-q^{4}f(q^{20},q^{20})f(q^{2},q^{38})-2q^{9}f(q^{40},q^{120})f(q^{2},q^{38})\big)\nonumber\\
 &:=\dfrac{2\varphi(q)}{(q;q)_{\infty}^{2}(q^{2};q^{2})_{\infty}}M_{2}(q)+\dfrac{2\psi(q^{2})}{(q;q)_{\infty}^{2}(q^{2};q^{2})_{\infty}}N_{2}(q),
\end{align}
say.

Taking $(a,b,c,d)=(-q^{4},-q^{16},-q^{6},-q^{14})$ in \eqref{ff},
\begin{align*}
f(q^{10},q^{30})f(q^{18},q^{22})-q^{4}f(q^{10},q^{30})f(q^{2},q^{38})=f(-q^{4},-q^{16})f(-q^{6},-q^{14}),
\end{align*}
which yields
\begin{align}
M_{2}(q) &=-qf(-q^{4},-q^{16})f(-q^{6},-q^{14}).\label{represe:S-final}
\end{align}

In view of \eqref{iden:psi} and \eqref{def:psi},
\begin{align}
 N_{2}(q)=&f(q^{20},q^{20})f(q^{18},q^{22})-2q^{9}f(q^{40},q^{120})f(q^{2},q^{38})\notag\\
 &\quad+2q^{5}f(q^{40},q^{120})f(q^{18},q^{22})-q^{4}f(q^{20},q^{20})f(q^{2},q^{38})\notag\\
 =&f(q^{20},q^{20})f(q^{18},q^{22})-q^{9}f(1,q^{40})f(q^{2},q^{38})\notag\\
 &\quad+q^{5}f(1,q^{40})f(q^{18},q^{22})-q^{4}f(q^{20},q^{20})f(q^{2},q^{38}).\label{relation}
\end{align}
Similarly, putting $(a,b,c,d)=(-q^{9},-q^{11},-q^{11},-q^{9})$ in \eqref{ff},
\begin{align}
f(q^{20},q^{20})f(q^{18},q^{22})-q^{9}f(1,q^{40})f(q^{2},q^{38})=f(-q^{9},-q^{11})^{2}.\label{iden:T1}
\end{align}
Picking $(a,b,c,d)=(-q,-q^{19},-q^{19},-q)$ in \eqref{ff},
\begin{align}
f(q^{20},q^{20})f(q^{2},q^{38})-qf(1,q^{40})f(q^{18},q^{22})=f(-q,-q^{19})^{2}.\label{iden:T2}
\end{align}
Taking $(a,b,c,d)=(-q^{4},-q^{6},q^{5},q^{5})$ in \eqref{ff},
\begin{align}
f(-q^{9},-q^{11})^{2}-q^{4}f(-q,-q^{19})^{2}=f(q^{5},q^{5})f(-q^{4},-q^{6})=\varphi(q^{5})f(-q^{4},-q^{6}).\label{iden:T3}
\end{align}
With the help of \eqref{relation}--\eqref{iden:T3},
\begin{align}
N_{2}(q) &=\varphi(q^{5})f(-q^{4},-q^{6}).\label{represe:T-final}
\end{align}

Substituting \eqref{represe:S-final} and \eqref{represe:T-final} into \eqref{gf:a5n+1},
\begin{align}\label{simp-gf:a5n+1}
\sum_{n=0}^{\infty}g_{1}(5n+1)q^{n} &=\dfrac{2\psi(q^{2})\varphi(q^{5})f(-q^{4},-q^{6})}{(q;q)_{\infty}^{2}(q^{2};q^{2})_{\infty}}
-\dfrac{2q\varphi(q)f(-q^{4},-q^{16})f(-q^{6},-q^{14})}{(q;q)_{\infty}^{2}(q^{2};q^{2})_{\infty}}\nonumber\\
 &=\dfrac{2\psi(q^{2})\varphi(q^{5})f(-q^{4},-q^{6})}{(q;q)_{\infty}^{2}(q^{2};q^{2})_{\infty}}-\dfrac{2q\varphi(q)\psi(q^{10})f(-q^{4},-q^{6})}
 {(q;q)_{\infty}^{2}(q^{2};q^{2})_{\infty}}.
\end{align}

On the other hand,
\begin{align}
 \dfrac{2}{(q,q^{2},q^{3},q^{4};q^{5})_{\infty}(q^{2},q^{8};q^{10})_{\infty}} &=\dfrac{2f(-q^{2},-q^{3})f(-q,-q^{4})f(-q^{4},-q^{6})}{(q;q)_{\infty}^{2}(q^{2};q^{2})_{\infty}}\nonumber\\
 &=\dfrac{2(q;q)_{\infty}(q^{5};q^{5})_{\infty}f(-q^{4},-q^{6})}{(q;q)_{\infty}^{2}(q^{2};q^{2})_{\infty}}.\label{simp:a1-5n+1}
\end{align}
In light of \eqref{phi-psi}, \eqref{simp-gf:a5n+1}, and \eqref{simp:a1-5n+1}, we obtain \eqref{a1:5n+1}.

The proofs of \eqref{a1:5n+2} and \eqref{a1:5n+4} are similar to that of \eqref{a1:5n}.

The proof of Theorem \ref{beau THM-2} is similar to Theorem \ref{beau THM-1}.

\section{Proofs of Theorems \ref{THM:relation} and \ref{THM:relation-2}}\label{sec:THM-relation}
We only prove \eqref{a2b2-relation1}, and the rest can be proved similarly.

From \cite{Tang2018}, we have the following representation for $\sum\limits_{n=0}^{\infty}g_{1,2,5}(n)q^{n}$:
\begin{align}
 \sum_{n=0}^{\infty} &g_{1,2,5}(n)q^{n}=\dfrac{(q^{10};q^{10})_{\infty}^{4}(q^{80};q^{80})_{\infty}^{5}f(q,q^{4})}{(q^{5};q^{5})_{\infty}^{5}(q^{20};q^{20})_{\infty}^{2}
 (q^{40};q^{40})_{\infty}^{2}(q^{160};q^{160})_{\infty}^{2}}\nonumber\\
 &\quad\times\Bigg(\sum_{n=-\infty}^{\infty}q^{40n^{2}+12n}
 -q^{4}\sum_{n=-\infty}^{\infty}q^{40n^{2}+28n}\Bigg)\nonumber\\
 &\quad+\dfrac{2(q^{10};q^{10})_{\infty}^{4}(q^{160};q^{160})_{\infty}^{2}f(q,q^{4})}
 {(q^{5};q^{5})_{\infty}^{5}(q^{20};q^{20})_{\infty}^{2}(q^{80};q^{80})_{\infty}}\Bigg(q^{14}\sum_{n=-\infty}^{\infty}q^{40n^{2}+28n}-q^{10}
 \sum_{n=-\infty}^{\infty}q^{40n^{2}+12n}\Bigg)\notag\\
 &\quad+\dfrac{2(q^{20};q^{20})_{\infty}^{2}f(q^{30},q^{50})f(q,q^{4})}{(q^{5};q^{5})_{\infty}^{3}(q^{10};q^{10})_{\infty}^{2}}
 \Bigg(q\sum_{n=-\infty}^{\infty}q^{40n^{2}+2n}-q^{10}\sum_{n=-\infty}^{\infty}q^{40n^{2}+38n}\notag\\
 &\quad+q^{4}\sum_{n=-\infty}^{\infty}q^{40n^{2}+22n}-q^{3}\sum_{n=-\infty}^{\infty}q^{40n^{2}+18n}\Bigg)\notag\\
 &\quad+\dfrac{2(q^{20};q^{20})_{\infty}^{2}f(q^{10},q^{70})f(q,q^{4})}{(q^{5};q^{5})_{\infty}^{3}(q^{10};q^{10})_{\infty}^{2}}
 \Bigg(q^{15}\sum_{n=-\infty}^{\infty}q^{40n^{2}+38n}-q^{9}\sum_{n=-\infty}^{\infty}q^{40n^{2}+22n}\notag\\
 &\quad+q^{8}\sum_{n=-\infty}^{\infty}q^{40n^{2}+18n}-q^{6}\sum_{n=-\infty}^{\infty}q^{40n^{2}+2n}\Bigg).
\end{align}

With the aid of \eqref{iden-1} and \eqref{iden-2},
\begin{align}
 &(-q^{2},-q^{3};q^{5})_{\infty}^{2}=\dfrac{(-q^{2},-q^{3},q^{5};q^{5})_{\infty}^{2}}{(q^{5};q^{5})_{\infty}^{2}}\nonumber\\
 =&\left(\dfrac{(q^{10};q^{10})_{\infty}^{5}}{(q^{5};q^{5})_{\infty}^{4}(q^{20};q^{20})_{\infty}^2}
 \sum_{m=-\infty}^{\infty}q^{20m^{2}+2m}+\dfrac{q^{4}(q^{10};q^{10})_{\infty}^{5}}{(q^{5};q^{5})_{\infty}^{4}(q^{20};q^{20})_{\infty}^2}
\sum_{m=-\infty}^{\infty}q^{20m^{2}+18m}\right)\nonumber\\
 &\quad+\left(\dfrac{2q^{2}(q^{20};q^{20})_{\infty}^{2}}{(q^{5};q^{5})_{\infty}^{2}(q^{10};q^{10})_{\infty}}\sum_{m=-\infty}^{\infty}q^{20m^{2}+8m}
 +\dfrac{2q^{3}(q^{20};q^{20})_{\infty}^{2}}{(q^{5};q^{5})_{\infty}^{2}(q^{10};q^{10})_{\infty}}\sum_{m=-\infty}^{\infty}q^{20m^{2}+12m}\right)\label{iden-3}
\end{align}
and
\begin{align}
(q^{4},q^{6};q^{10})_{\infty}=\dfrac{1}{(q^{10};q^{10})_{\infty}}\left(\sum_{m=-\infty}^{\infty}q^{20m^{2}+2m}-q^{4}\sum_{m=-\infty}^{\infty}q^{20m^{2}+18m}\right).
\label{iden-4}
\end{align}
Combining \eqref{iden-3} and \eqref{iden-4} as well as following the similar strategy of proving \eqref{iden-1}, we obtain
\begin{align*}
\sum_{n=0}^{\infty} &g_{2,4,5}(n)q^{n}
=\dfrac{f(q^{2},q^{3})}{(q^{5};q^{5})_{\infty}(q^{10};q^{10})_{\infty}}\Bigg(\sum_{m=-\infty}^{\infty}q^{20m^{2}+2m}-q^{4}\sum_{m=-\infty}^{\infty}q^{20m^{2}+18m}\Bigg)\\
 &\quad\times\Bigg(\dfrac{(q^{10};q^{10})_{\infty}^{5}}{(q^{5};q^{5})_{\infty}^{4}(q^{20};q^{20})_{\infty}^{2}}
 \sum_{n=-\infty}^{\infty}q^{20n^{2}+2n}+\dfrac{q^{4}(q^{10};q^{10})_{\infty}^{5}}{(q^{5};q^{5})_{\infty}^{4}(q^{20};q^{20})_{\infty}^{2}}
 \sum_{n=-\infty}^{\infty}q^{20n^{2}+18n}\\
 &\quad+\dfrac{2q^{2}(q^{20};q^{20})_{\infty}^{2}}{(q^{5};q^{5})_{\infty}^{2}(q^{10};q^{10})_{\infty}}\sum_{n=-\infty}^{\infty}q^{20n^{2}+8n}
 +\dfrac{2q^{3}(q^{20};q^{20})_{\infty}^{2}}{(q^{5};q^{5})_{\infty}^{2}(q^{10};q^{10})_{\infty}}\sum_{n=-\infty}^{\infty}q^{20n^{2}+12n}\Bigg)\\
 &=\dfrac{(q^{10};q^{10})_{\infty}^{4}(q^{80};q^{80})_{\infty}^{5}f(q^{2},q^{3})}{(q^{5};q^{5})_{\infty}^{5}(q^{20};q^{20})_{\infty}^{2}(q^{40};q^{40})_{\infty}^{2}
 (q^{160};q^{160})_{\infty}^{2}}
 \Bigg(\sum_{n=-\infty}^{\infty}q^{40n^{2}+4n}-q^{8}\sum_{n=-\infty}^{\infty}q^{40n^{2}+36n}\Bigg)\\
 &\quad+\dfrac{2(q^{10};q^{10})_{\infty}^{4}(q^{160};q^{160})_{\infty}^{2}f(q^{2},q^{3})}{(q^{5};q^{5})_{\infty}^{5}(q^{20};q^{20})_{\infty}^{2}(q^{80};q^{80})_{\infty}}
 \Bigg(q^{18}\sum_{n=-\infty}^{\infty}q^{40n^{2}+36n}-q^{10}\sum_{n=-\infty}^{\infty}q^{40n^{2}+4n}\Bigg)\\
 &\quad+\dfrac{2(q^{20};q^{20})_{\infty}^{2}f(q^{30},q^{50})f(q^{2},q^{3})}{(q^{5};q^{5})_{\infty}^{3}(q^{10};q^{10})_{\infty}^{2}}
 \Bigg(q^{2}\sum_{n=-\infty}^{\infty}q^{40n^{2}+6n}-q^{9}\sum_{n=-\infty}^{\infty}q^{40n^{2}+34n}\\
 &\quad+q^{3}\sum_{n=-\infty}^{\infty}q^{40n^{2}+14n}-q^{6}\sum_{n=-\infty}^{\infty}q^{40n^{2}+26n}\Bigg)\\
 &\quad+\dfrac{2(q^{20};q^{20})_{\infty}^{2}f(q^{10},q^{70})f(q^{2},q^{3})}{(q^{5};q^{5})_{\infty}^{3}(q^{10};q^{10})_{\infty}^{2}}
 \Bigg(q^{14}\sum_{n=-\infty}^{\infty}q^{40n^{2}+34n}-q^{8}\sum_{n=-\infty}^{\infty}q^{40n^{2}+14n}\\
 &\quad+q^{11}\sum_{n=-\infty}^{\infty}q^{40n^{2}+26n}-q^{7}\sum_{n=-\infty}^{\infty}q^{40n^{2}+6n}\Bigg).
\end{align*}

Define
\begin{align*}
S_{1} &:=f(q,q^{4})\sum_{n=-\infty}^{\infty}q^{40n^{2}+12n}-q^{4}f(q,q^{4})\sum_{n=-\infty}^{\infty}q^{40n^{2}+28n},\\
S_{2} &:=q^{14}f(q,q^{4})\sum_{n=-\infty}^{\infty}q^{40n^{2}+28n}-q^{10}f(q,q^{4})\sum_{n=-\infty}^{\infty}q^{40n^{2}+12n},\\
S_{3} &:=qf(q,q^{4})\sum_{n=-\infty}^{\infty}q^{40n^{2}+2n}-q^{10}f(q,q^{4})\sum_{n=-\infty}^{\infty}q^{40n^{2}+38n},\\
S_{4} &:=q^{4}f(q,q^{4})\sum_{n=-\infty}^{\infty}q^{40n^{2}+22n}-q^{3}f(q,q^{4})\sum_{n=-\infty}^{\infty}q^{40n^{2}+18n},\\
S_{5} &:=q^{15}f(q,q^{4})\sum_{n=-\infty}^{\infty}q^{40n^{2}+38n}-q^{9}f(q,q^{4})\sum_{n=-\infty}^{\infty}q^{40n^{2}+22n},\\
S_{6} &:=q^{8}f(q,q^{4})\sum_{n=-\infty}^{\infty}q^{40n^{2}+18n}-q^{6}f(q,q^{4})\sum_{n=-\infty}^{\infty}q^{40n^{2}+2n},\\
T_{1} &:=f(q^{2},q^{3})\sum_{n=-\infty}^{\infty}q^{40n^{2}+4n}-q^{8}f(q^{2},q^{3})\sum_{n=-\infty}^{\infty}q^{40n^{2}+36n},\\
T_{2} &:=q^{18}f(q^{2},q^{3})\sum_{n=-\infty}^{\infty}q^{40n^{2}+36n}-q^{10}f(q^{2},q^{3})\sum_{n=-\infty}^{\infty}q^{40n^{2}+4n},\\
T_{3} &:=q^{2}f(q^{2},q^{3})\sum_{n=-\infty}^{\infty}q^{40n^{2}+6n}-q^{9}f(q^{2},q^{3})\sum_{n=-\infty}^{\infty}q^{40n^{2}+34n},\\
T_{4} &:=q^{3}f(q^{2},q^{3})\sum_{n=-\infty}^{\infty}q^{40n^{2}+14n}-q^{6}f(q^{2},q^{3})\sum_{n=-\infty}^{\infty}q^{40n^{2}+26n},\\
T_{5} &:=q^{14}f(q^{2},q^{3})\sum_{n=-\infty}^{\infty}q^{40n^{2}+34n}-q^{8}f(q^{2},q^{3})\sum_{n=-\infty}^{\infty}q^{40n^{2}+14n},\\
T_{6} &:=q^{11}f(q^{2},q^{3})\sum_{n=-\infty}^{\infty}q^{40n^{2}+26n}-q^{7}f(q^{2},q^{3})\sum_{n=-\infty}^{\infty}q^{40n^{2}+6n}.
\end{align*}

Next, we prove that
\begin{align*}
H_{5,1}(S_{i})=H_{5,2}(T_{i})\quad \textrm{for}~1\leq i\leq6.
\end{align*}
We only prove the case $H_{5,1}(S_{1})=H_{5,2}(T_{1})$ here because the proofs of remaining cases are similar.

Notice that
\begin{align*}
f(q^{2},q^{3}) &=\sum_{m=-\infty}^{\infty}q^{(5m^{2}+m)/2}\\
 &=\sum_{m=-\infty}^{\infty}q^{10m^{2}+m}+q^{2}\sum_{m=-\infty}^{\infty}q^{10m^{2}+9m}\\
 &=\sum_{m=-\infty}^{\infty}q^{40m^{2}+2m}+q^{9}\sum_{m=-\infty}^{\infty}q^{40m^{2}+38m}+q^{2}\sum_{m=-\infty}^{\infty}q^{40m^{2}+18m}
+q^{3}\sum_{m=-\infty}^{\infty}q^{40m^{2}+22m}
\end{align*}
and
\begin{align*}
f(q,q^{4})&=\sum_{m=-\infty}^{\infty}q^{40m^{2}+6m}+q^{7}\sum_{m=-\infty}^{\infty}q^{40m^{2}+34m}
 +q\sum_{m=-\infty}^{\infty}q^{40m^{2}+14m}+q^{4}\sum_{m=-\infty}^{\infty}q^{40m^{2}+26m}.
\end{align*}

Therefore,
\begin{align*}
S_{1} &=P_{1}+P_{2}+P_{3}+P_{4}-P_{5}-P_{6}-P_{7}-P_{8},\\
T_{1} &=Q_{1}+Q_{2}+Q_{3}+Q_{4}-Q_{5}-Q_{6}-Q_{7}-Q_{8},
\end{align*}
where
\begin{align*}
P_{1} &=\sum_{m,n=-\infty}^{\infty}q^{40m^{2}+6m+40n^{2}+12n},~P_{2}=q^{7}\sum_{m,n=-\infty}^{\infty}q^{40m^{2}+34m+40n^{2}+12n},\\
P_{3} &=q\sum_{m,n=-\infty}^{\infty}q^{40m^{2}+14m+40n^{2}+12n},~P_{4}=q^{4}\sum_{m,n=-\infty}^{\infty}q^{40m^{2}+26m+40n^{2}+12n},\\
P_{5} &=q^{4}\sum_{m,n=-\infty}^{\infty}q^{40m^{2}+6m+40n^{2}+28n},~P_{6}=q^{11}\sum_{m,n=-\infty}^{\infty}q^{40m^{2}+34m+40n^{2}+28n},\\
P_{7} &=q^{5}\sum_{m,n=-\infty}^{\infty}q^{40m^{14}+14m+40n^{2}+28n},~P_{8}=q^{8}\sum_{m,n=-\infty}^{\infty}q^{40m^{2}+26m+40n^{2}+28n},\\
Q_{1} &=q^{9}\sum_{m,n=-\infty}^{\infty}q^{40m^{2}+38m+40n^{2}+4n},~Q_{2}=\sum_{m,n=-\infty}^{\infty}q^{40m^{2}+2m+40n^{2}+4n},\\
Q_{3} &=q^{2}\sum_{m,n=-\infty}^{\infty}q^{40m^{2}+18m+40n^{2}+4n},~Q_{4}=q^{3}\sum_{m,n=-\infty}^{\infty}q^{40m^{2}+22m+40n^{2}+4n},\\
Q_{5} &=q^{17}\sum_{m,n=-\infty}^{\infty}q^{40m^{2}+38m+40n^{2}+36n},~Q_{6}=q^{8}\sum_{m,n=-\infty}^{\infty}q^{40m^{2}+2m+40n^{2}+36n},\\
Q_{7} &=q^{10}\sum_{m,n=-\infty}^{\infty}q^{40m^{2}+18m+40n^{2}+36n},~Q_{8}=q^{11}\sum_{m,n=-\infty}^{\infty}q^{40m^{2}+22m+40n^{2}+36n}.
\end{align*}

Following the similar strategy of proving \eqref{S1:5}, we deduce that
\begin{align*}
H_{5,1}(P_{i}) &=H_{5,2}(Q_{i})\quad \textrm{for}~1\leq i\leq8.
\end{align*}
This establishes \eqref{a2b2-relation1}.

Finally, we are ready to prove \eqref{q-iden-1}--\eqref{q-iden-4}.

It follows easily from \eqref{JTP-identity} that
\begin{align}
f(-q^{2s},-q^{2t}) &=\dfrac{(q^{2s+2t};q^{2s+2t})_{\infty}}{(q^{s+t};q^{s+t})_{\infty}^{2}}f(q^{s},q^{t})f(-q^{s},-q^{t}),\quad s,t\in\mathbb{N}_{+},\label{useful-iden-1}\\
f(q,q^{4})f(q^{2},q^{3}) &=\dfrac{(q^{2};q^{2})_{\infty}(q^{5};q^{5})_{\infty}^{3}}{(q;q)_{\infty}(q^{10};q^{10})_{\infty}},\label{useful-iden-2}\\
f(-q^{2},-q^{3})f(-q^{4},-q^{6}) &=(q^{5};q^{5})_{\infty}(q^{2},q^{3},q^{4},q^{6},q^{7},q^{8},q^{10};q^{10})_{\infty}\nonumber\\
 &=\dfrac{(q^{2};q^{2})_{\infty}(q^{5};q^{5})_{\infty}}{(q^{10};q^{10})_{\infty}}f(-q^{3},-q^{7}),\label{iden-ff-1}\\
f(-q,-q^{4})f(-q^{2},-q^{8}) &=(q^{5};q^{5})_{\infty}(q,q^{2},q^{4},q^{6},q^{8},q^{9},q^{10};q^{10})_{\infty}\nonumber\\
 &=\dfrac{(q^{2};q^{2})_{\infty}(q^{5};q^{5})_{\infty}}{(q^{10};q^{10})_{\infty}}f(-q,-q^{9}).\label{iden-ff-2}
\end{align}

On one hand, according to \eqref{useful-iden-1} and \eqref{useful-iden-2}, we obtain
\begin{align}
&(-q,-q^{4};q^{5})_{\infty}^{2}(q^{4},q^{6};q^{10})_{\infty}+(-q^{2},-q^{3};q^{5})_{\infty}^{2}(q^{2},q^{8};q^{10})_{\infty}\notag\\
 =&\dfrac{1}{(q^{5};q^{5})_{\infty}^{2}(q^{10};q^{10})_{\infty}}\bigg(f(q,q^{4})^{2}f(-q^{4},-q^{6})+f(q^{2},q^{3})^{2}f(-q^{2},-q^{8})\bigg)\notag\\
 =&\dfrac{1}{(q^{5};q^{5})_{\infty}^{4}}f(q,q^{4})f(q^{2},q^{3})\bigg(f(q,q^{4})f(-q^{2},-q^{3})+f(-q,-q^{4})f(q^{2},q^{3})\bigg)\notag\\
 =&\dfrac{(q^{2};q^{2})_{\infty}}{(q;q)_{\infty}(q^{5};q^{5})_{\infty}(q^{10};q^{10})_{\infty}}\bigg(f(q,q^{4})f(-q^{2},-q^{3})+f(-q,-q^{4})f(q^{2},q^{3})\bigg).
 \label{iden-P1-1}
\end{align}
Taking $(a,b,c,d)=(-q,-q^{4},q^{2},q^{3})$ in \eqref{ff},
\begin{align}
f(-q,-q^{4})f(q^{2},q^{3})=f(-q^{3},-q^{7})f(-q^{4},-q^{6})-qf(-q,-q^{9})f(-q^{2},-q^{8}).\label{ff-iden-1}
\end{align}
Picking $(a,b,c,d)=(q,q^{4},-q^{2},-q^{3})$ in \eqref{ff},
\begin{align}
f(q,q^{4})f(-q^{2},-q^{3})=f(-q^{3},-q^{7})f(-q^{4},-q^{6})+qf(-q,-q^{9})f(-q^{2},-q^{8}).\label{ff-iden-2}
\end{align}
Substituting \eqref{ff-iden-1} and \eqref{ff-iden-2} into \eqref{iden-P1-1},
\begin{align*}
(-q,-q^{4};q^{5})_{\infty}^{2}(q^{4},q^{6};q^{10})_{\infty} &+(-q^{2},-q^{3};q^{5})_{\infty}^{2}(q^{2},q^{8};q^{10})_{\infty}\\
 &\quad=\dfrac{2(q^{2};q^{2})_{\infty}}{(q;q)_{\infty}(q^{5};q^{5})_{\infty}(q^{10};q^{10})_{\infty}}f(-q^{3},-q^{7})f(-q^{4},-q^{6}).
\end{align*}

On the other hand, with the help of \eqref{useful-iden-2} and \eqref{iden-ff-1},
\begin{align}
 &\dfrac{2(q^{10};q^{10})_{\infty}^{3}}{(q^{2};q^{2})_{\infty}(q^{5};q^{5})_{\infty}^{2}}(-q,-q^{4};q^{5})_{\infty}(q^{4},q^{6};q^{10})_{\infty}^{3}\notag\\
 =&\dfrac{2}{(q^{2};q^{2})_{\infty}(q^{5};q^{5})_{\infty}^{3}}f(q,q^{4})f(-q^{4},-q^{6})^{3}\notag\\
 =&\dfrac{2(q^{10};q^{10})_{\infty}}{(q^{2};q^{2})_{\infty}(q^{5};q^{5})_{\infty}^{5}}\bigg(\left(f(q,q^{4})f(q^{2},q^{3})\right)\left(f(-q^{2},-q^{3})f(-q^{4},-q^{6})\right)f(-q^{4},-q^{6})\bigg)\notag\\
 =&\dfrac{2(q^{2};q^{2})_{\infty}}{(q;q)_{\infty}(q^{5};q^{5})_{\infty}(q^{10};q^{10})_{\infty}}f(-q^{3},-q^{7})f(-q^{4},-q^{6}).\notag
\end{align}
This establishes \eqref{q-iden-1}.

According to \eqref{useful-iden-1}--\eqref{iden-ff-2} and \eqref{ff-iden-1},
\begin{align*}
 &(-q,-q^{4};q^{5})_{\infty}(q^{4},q^{6};q^{10})_{\infty}^{3}-q(-q^{2},-q^{3};q^{5})_{\infty}(q^{2},q^{8};q^{10})_{\infty}^{3}\\ =&\dfrac{1}{(q^{5};q^{5})_{\infty}(q^{10};q^{10})_{\infty}^{3}}\bigg(f(q,q^{4})f(-q^{4},-q^{6})^{3}-qf(q^{2},q^{3})f(-q^{2},-q^{8})^{3}\bigg)\\
 =&\dfrac{1}{(q^{5};q^{5})_{\infty}^{3}(q^{10};q^{10})_{\infty}^{2}}f(q,q^{4})f(q^{2},q^{3})\bigg(\left(f(-q^{2},-q^{3})f(-q^{4},-q^{6})\right)f(-q^{4},-q^{6})\\
 &\quad-q\left(f(-q,-q^{4})f(-q^{2},-q^{8})\right)f(-q^{2},-q^{8})\bigg)\\
 =&\dfrac{(q^{2};q^{2})_{\infty}^{2}(q^{5};q^{5})_{\infty}}{(q;q)_{\infty}(q^{10};q^{10})_{\infty}^{4}}f(-q,-q^{4})f(q^{2},q^{3}).
\end{align*}

Also, using \eqref{useful-iden-1},
\begin{align*}
 &\dfrac{(q^{2};q^{2})_{\infty}(q^{5};q^{5})_{\infty}^{2}}{(q^{10};q^{10})_{\infty}^{3}}(-q^{2},-q^{3};q^{5})_{\infty}^{2}(q^{2},q^{8};q^{10})_{\infty}\\
 =&\dfrac{(q^{2};q^{2})_{\infty}}{(q^{10};q^{10})_{\infty}^{4}}f(q^{2},q^{3})^{2}f(-q^{2},-q^{8})\\
 =&\dfrac{(q^{2};q^{2})_{\infty}}{(q^{5};q^{5})_{\infty}^{2}(q^{10};q^{10})_{\infty}^{3}}\left(f(q,q^{4})f(q^{2},q^{3})\right)f(-q,-q^{4})f(q^{2},q^{3})\\
 =&\dfrac{(q^{2};q^{2})_{\infty}^{2}(q^{5};q^{5})_{\infty}}{(q;q)_{\infty}(q^{10};q^{10})_{\infty}^{4}}f(-q,-q^{4})f(q^{2},q^{3}).
\end{align*}
This proves \eqref{q-iden-3}.

The proofs of \eqref{q-iden-2} and \eqref{q-iden-4} are similar to those of \eqref{q-iden-1} and \eqref{q-iden-3}, respectively.

\section{Final remarks}
We close this paper with some remarks.
\begin{enumerate}[1)]
\item Following the same line of proving \eqref{a2b2-relation1}--\eqref{a3b3-relation2}, we can also prove
\begin{align}
a_{1,1,7}(7n+1) &=a_{3,3,7}(7n+3),\label{analog-relat-3}\\
a_{1,6,7}(7n+6) &=-a_{2,2,7}(7n+6),\\
a_{4,6,11}(11n+5) &=-a_{5,2,11}(11n+4),\\
a_{4,6,11}(11n+7) &=a_{5,2,11}(11n+6).\label{analog-relat-2}
\end{align}

There are other identities similar to \eqref{analog-relat-3}--\eqref{analog-relat-2} for $t=11$. Therefore it is natural to ask whether or not there exist some identities between $a_{r,s,t}(n)$ and $b_{r,s,t}(n)$ for arbitrary prime $t$, which parallel to \eqref{analog-relat-3}--\eqref{analog-relat-2}.

\item Following the similar method of proving \eqref{iden-vanish} in \cite{Tang2018}, we can also obtain
\begin{align}
g_{2}(5n+3)=h_{2}(5n+1)=0,\label{iden-vanish-2}
\end{align}
which parallels to \eqref{iden-vanish}.

Eqs. \eqref{q-iden-2} and \eqref{iden-vanish-2} imply
\begin{align*}
h_{2}(5n+3) &\equiv0\pmod{2}.
\end{align*}

Furthermore, there are some results similar to \eqref{a2b2-relation1}--\eqref{final-relation1} in another types of $q$-series expansions. Relating to \eqref{h,r-s-t} and \eqref{q-iden-1}, define
\begin{align*}
(-q,-q^{4};q^{5})_{\infty}^{2}(q^{4},q^{6};q^{10})_{\infty}^{2}(q^{2},q^{8};q^{10})_{\infty} &=\sum_{n=0}^{\infty}\widehat{g_{1}}(n)q^{n},\\
(-q^{2},-q^{3};q^{5})_{\infty}^{2}(q^{2},q^{8};q^{10})_{\infty}^{2}(q^{4},q^{6};q^{10})_{\infty} &=\sum_{n=0}^{\infty}\widehat{h_{1}}(n)q^{n}.
\end{align*}
Following the similar strategy of proving \eqref{a2b2-relation1}, we can also obtain
\begin{align*}
\widehat{g_{1}}(5n) &=-\widehat{h_{1}}(5n),\\
\widehat{g_{1}}(5n+2) &=-\widehat{h_{1}}(5n+1)=0,\\
\widehat{g_{1}}(5n+3) &=-\widehat{h_{1}}(5n+3).
\end{align*}
Of course, we can also obtain similar results for \eqref{q-iden-2}--\eqref{q-iden-4}.

\item We also learn from Nayandeep Deka Baruah and Mandeep Kaur \cite{BK2018} that they have provided new proofs of \eqref{a3b3-relation1}--\eqref{analog-relati-2}. Their proofs rely highly on two known $q$-identities \cite[Eqs. (40.1.1) and (41.1.5)]{Hirb2017} involving Ramanujan's continued fractions.

\item Finally, with the help of computer, the signs of coefficients in $q$-series \eqref{gf:g2} and \eqref{gf:h2} appear to be periodic.
\begin{conjecture}
For any integer $n\geq0$,
\begin{align}
g_{2}(5n) &>0,\label{ineq-begin}\\
g_{2}(5n+1) &<0,\\
g_{2}(5n+2) &>0,\\
g_{2}(5n+4) &<0,\\
h_{2}(5n) &>0,\\
h_{2}(5n+2) &<0,\\
h_{2}(5n+3) &<0,\\
h_{3}(5n+4) &>0.\label{ineq-end}
\end{align}
\end{conjecture}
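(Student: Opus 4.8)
The plan is to mirror the proofs of Theorems~\ref{beau THM-1} and~\ref{beau THM-2}: first produce explicit $5$-dissections of the generating functions \eqref{gf:g2} and \eqref{gf:h2}, and then read off the signs of the resulting components. Concretely, I would expand $(q,q^{4};q^{5})_{\infty}^{2}$ and $(q^{4},q^{6};q^{10})_{\infty}$ (respectively $(q^{2},q^{3};q^{5})_{\infty}^{2}$ and $(q^{2},q^{8};q^{10})_{\infty}$) as theta series via the Jacobi triple product, following the derivation that produced \eqref{gf:all a}, collect the terms in each residue class modulo $5$, and simplify the double sums with the operator $H_{5,l}$ exactly as in \eqref{S1:5}--\eqref{S8:5}. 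Applying the theta addition formula \eqref{ff} together with \eqref{phi-psi-1}, \eqref{phi-psi-2}, and \eqref{phi-psi} should collapse each component $\sum_{n\geq0}g_{2}(5n+l)q^{n}$ and $\sum_{n\geq0}h_{2}(5n+l)q^{n}$ into a compact closed form, yielding the analogues of \eqref{a1:5n}--\eqref{a1:5n+4} and \eqref{b1:5n}--\eqref{b1:5n+4}. Here \eqref{iden-vanish-2} already guarantees that only the residues $0,1,2,4$ (for $g_{2}$) and $0,2,3,4$ (for $h_{2}$) survive, matching the eight inequalities \eqref{ineq-begin}--\eqref{ineq-end}.

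Once the closed forms are in hand, the sign statements should follow from the elementary fact that a reciprocal $1/\prod_{i}(1-q^{a_{i}})$ with all $a_{i}>0$ has nonnegative coefficients: a component of the shape $+1/(\text{positive product})$ forces $>0$, and one of the shape $-1/(\text{positive product})$ forces $<0$, precisely as $H_{4}(q)$ in \eqref{b1:5n+4} produced $h_{1}(5n+4)<0$. I expect the alternating signs for $g_{2}$ and the pattern for $h_{2}$ to arise from overall $\pm$ signs attached to such reciprocals, and I anticipate relations $g_{2}(5n)=h_{2}(5n)$ and $g_{2}(5n+4)=-h_{2}(5n+4)$ paralleling \eqref{a1b1-1}--\eqref{a1b1-2} (with, plausibly, $g_{2}(5n+2)=-h_{2}(5n+2)$ as well), which would roughly halve the work. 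The sporadic small-$n$ exceptions, like the $n\neq1$ cases in the corollaries to Theorems~\ref{beau THM-1} and~\ref{beau THM-2}, would be checked directly.

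The hard part will be the simplification step, not the sign bookkeeping. In Theorems~\ref{beau THM-1} and~\ref{beau THM-2} the outer factor entering each generating function became, after dissection, a \emph{reciprocal} of an all-positive product, which is exactly why every component came out sign-definite. Here the outer factor $(q,q^{4};q^{5})_{\infty}^{2}$ (and $(q^{2},q^{3};q^{5})_{\infty}^{2}$) is a genuine product of terms $1-q^{n}$, so it is a priori conceivable that some dissection component emerges as an honest infinite product, or as a difference of two theta products, whose coefficients carry no obvious fixed sign. Verifying that every application of \eqref{ff} telescopes so as to leave a single $\pm$ reciprocal --- the phenomenon exhibited by \eqref{represe:M}, \eqref{represe:N}, \eqref{represe:S-final}, and \eqref{represe:T-final} --- is the delicate technical core on which the whole argument rests.

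Should a component resist such a collapse, my fallback would be to transfer the known signs through Theorems~\ref{THM:relation} and~\ref{THM:relation-2}. For instance, \eqref{q-iden-4} expresses the $h_{2}$ generating function as $\tfrac{(q^{2};q^{2})_{\infty}^{2}(q^{5};q^{5})_{\infty}^{4}}{(q;q)_{\infty}^{2}(q^{10};q^{10})_{\infty}^{4}}$ times $(-q,-q^{4};q^{5})_{\infty}(q^{4},q^{6};q^{10})_{\infty}^{3}+q(-q^{2},-q^{3};q^{5})_{\infty}(q^{2},q^{8};q^{10})_{\infty}^{3}$, whose two summands are the series \eqref{h,r-s-t} for $h_{1,4,5}$ and $h_{2,2,5}$, whose arithmetic progressions are already linked by \eqref{final-relation1} and its companion in Theorem~\ref{THM:relation}. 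Combining these relations with the $5$-dissection of the prefactor could pin down the residual signs without a full product evaluation, although the mixed-sign prefactor, which reduces to $(q;q^{2})_{\infty}^{2}/(q^{5};q^{10})_{\infty}^{4}$, makes this route decidedly more intricate than the direct dissection.
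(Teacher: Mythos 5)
The statement you were given is not proved in the paper at all: it appears in the final remarks as a \emph{Conjecture}, supported only by computer experiment, and the authors explicitly write that ``it would be interesting to find an elementary proof'' of \eqref{ineq-begin}--\eqref{ineq-end}. So there is no proof in the paper to compare yours against, and your proposal has to stand on its own. It does not, because the step on which everything hinges is precisely the step you leave unverified. Your plan is: dissect \eqref{gf:g2} and \eqref{gf:h2} modulo $5$ as in Theorems \ref{beau THM-1} and \ref{beau THM-2}, show each component equals $\pm 1/\prod_i(1-q^{a_i})$, and read off the signs. You correctly identify the collapse to a single signed reciprocal as ``the delicate technical core on which the whole argument rests,'' but you never carry it out, and there is concrete reason to doubt it goes through in the form you need. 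In Theorems \ref{beau THM-1} and \ref{beau THM-2} the sign-definiteness came from the fact that the factors $(-q^r,-q^{t-r};q^t)_\infty^2$ are theta functions $f(q^a,q^b)$ with \emph{positive} arguments, so the analogue of \eqref{iden-1} has all plus signs and the applications of \eqref{ff} telescope to one signed reciprocal. For \eqref{gf:g2} and \eqref{gf:h2} the corresponding factors are $(q^r,q^{t-r};q^t)_\infty^2$, whose theta expansions carry alternating signs; the dissection components then come out as differences of theta products with no manifest sign, and whether \eqref{ff} telescopes them down to a single $\pm$ reciprocal is exactly the open question. Had it done so routinely, the authors would have stated a theorem rather than a conjecture.

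Your fallback route is also not viable as described: transferring signs through \eqref{q-iden-2} and \eqref{q-iden-4} requires multiplying by the prefactor $(q;q)_\infty^2(q^{10};q^{10})_\infty^4/\bigl((q^2;q^2)_\infty^2(q^5;q^5)_\infty^4\bigr)$ or its reciprocal, and, as you yourself observe, this factor has coefficients of both signs, so positivity of one side gives no control over the other without additional input. Two smaller points: the relations $g_2(5n)=h_2(5n)$ and $g_2(5n+4)=-h_2(5n+4)$ that you ``anticipate'' are consistent with the conjectured sign pattern but are themselves unproved and would need the same missing dissection analysis; and checking ``sporadic small-$n$ exceptions'' is moot until the general-$n$ claim is established. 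In summary, what you have written is a reasonable research programme whose critical lemma --- that every $5$-dissection component of \eqref{gf:g2} and \eqref{gf:h2} is a signed reciprocal of a product of terms $1-q^{a_i}$ --- is asserted as a hope rather than proved, so the conjecture remains open under your proposal just as it does in the paper.
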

It would be interesting to find an elementary proof of \eqref{ineq-begin}--\eqref{ineq-end}.
\end{enumerate}

\section*{Acknowledgement}
The authors are indebted to Shishuo Fu for his helpful comments on a preliminary version of this paper. The first author was supported by the National Natural Science Foundation of China (No.~11501061) and the Fundamental Research Funds for the Central Universities (No.~2018CDXYST0024). The second author was supported by the National Natural Science Foundation of China (No.~11571143) and Jiangsu National Funds for Distinguished Young Scientists (No.~BK20180044).

\end{document}